\date{}
\title{\vspace{-0.8cm}Self-similarity of graphs}
\author{
Choongbum Lee \thanks{Department of Mathematics, UCLA, Los
Angeles, CA, 90095. Email: choongbum.lee@gmail.com. Research
supported in part by a Samsung Scholarship.}
\and
Po-Shen Loh \thanks{Department of Mathematical Sciences,
Carnegie Mellon University, Pittsburgh, PA 15213, e-mail: ploh@cmu.edu.
Research supported in part by an NSA Young Investigators Grant and a USA-Israel BSF grant.
}
\and
Benny Sudakov \thanks{Department of Mathematics, UCLA, Los Angeles, CA 90095.
Email: bsudakov@math.ucla.edu. Research supported in
part by NSF grant DMS-1101185, NSF CAREER award DMS-0812005 and by USA-Israeli BSF grant. }
}
\theoremstyle{plain}
\newtheorem{THM}{Theorem}[section]
\newtheorem{PROP}[THM]{Proposition}
\newtheorem{PROBLEM}[THM]{Problem}
\newtheorem{LEMMA}[THM]{Lemma}
\newtheorem{DEF}[THM]{Definition}
\newtheorem{COR}[THM]{Corollary}
\theoremstyle{definition}
\newcommand{\pr}[1]{\mathbb{P}\left[ #1 \right]}
\newcommand{\ISO}{\iota}
\newcommand{\Gnp}{G_{n,p}}
\newcommand{\bin}{\text{Bin}}
\begin{document}
\maketitle


\begin{abstract}
  An old problem raised independently by Jacobson and Sch\"onheim asks
  to determine the maximum $s$ for which every graph with $m$ edges
  contains a pair of edge-disjoint isomorphic subgraphs with $s$
  edges. In this paper we determine this maximum up to a constant
  factor. We show that every $m$-edge graph contains a pair of
  edge-disjoint isomorphic subgraphs with at least $c (m\log m)^{2/3}$
  edges for some absolute constant $c$, and find graphs where this
  estimate is off only by a multiplicative constant.  Our results
  improve bounds of Erd\H{o}s, Pach, and Pyber from 1987.
\end{abstract}

\section{Introduction}

The decomposition of a given graph into smaller subgraphs is an old
problem in graph theory that has been studied from numerous
perspectives. A celebrated result of Wilson \cite{Wilson} asserts that
given any fixed graph $H$, the edge set of any sufficiently large
complete graph $K_n$ can be partitioned into edge-disjoint copies of
$H$, as long as the obvious necessary divisibility conditions $e(H)
\mid \binom{n}{2}$ and $g \mid n-1$ (where $g$ is the greatest common
divisor of the degrees of $H$) are satisfied.


A \emph{factor}\/ of a graph is a spanning subgraph, and a
\emph{factorization}\/ is a partition of its edges into factors. A
series of papers by Graham, Harary, Robinson, Wallis, and Wormald
(see, e.g., \cite{iso-9, iso-8, iso-1, iso-2, iso-6}) introduced the
systematic study of \emph{isomorphic factorizations}, in which the
resulting factors are required to be isomorphic to each other as
graphs. In this literature, a graph $G$ is said to be {\em divisible}
by an integer $t$, or {\em $t$-divisible}, if $G$ admits an isomorphic
factorization into $t$ parts, although the analogy with the
number-theoretic notion of divisibility is only syntactical. The
notion of $2$-divisibility has also been termed \emph{bisectable},
with some authors tagging on the extra condition that the resulting
factors were also connected graphs.

The earliest work concerned the divisibility of the complete
graph. Extending a partial result of Guidotti \cite{Guidotti}, Harary,
Robinson, and Wormald \cite{iso-1} proved that the complete graph
$K_n$ is divisible by any integer $t$ which satisfies the obvious
necessary condition $t \mid \binom{n}{2}$. Most other existing
research on divisibility concentrates on trees and forests, perhaps
because their simple structure appears more tractable.
Algorithmically, Graham and Robinson proved in \cite{iso-9} that it is
NP-hard to decide whether a tree is $2$-divisible, while Harary and
Robinson \cite{iso-8} discovered a polynomial-time algorithm to decide
whether a tree admits a isomorphic factorization into two connected
graphs. The best general result on trees is due to Alon, Caro, and
Krasikov \cite{ACK}, who showed that every $m$-edge tree can be made
$2$-divisible by deleting only $O(m / \log \log m)$ edges.


Once one considers general graphs, however, it becomes essentially
impossible to hope for $2$-divisibility or even closeness to
$2$-divisibility. It is therefore natural to ask what is the largest
2-divisible subgraph which must exist in a given graph. This problem
(stated below in generality for hypergraphs) was originally raised
independently by Jacobson and Sch\"onheim.

\begin{PROBLEM}
  Let the \textbf{self-similarity} of an $r$-uniform hypergraph $G$,
  denoted $\ISO(G)$, be the largest integer $s$ for which $G$
  contains a pair of edge-disjoint isomorphic sub-hypergraphs with $s$
  edges each.  For each positive integer $m$, let $\ISO_r(m)$ be the
  minimum of $\ISO(G)$ over all $r$-uniform hypergraphs with $m$
  edges.  Determine $\ISO_r(m)$.
\end{PROBLEM}

\noindent \textbf{Remark.} This paper focuses on graphs ($r=2$), so we
will write $\ISO(m)$ instead of $\ISO_2(m)$ throughout.

\medskip

The first main result in this area was due to Erd\H{o}s, Pach, and
Pyber \cite{EPP}. Specifically, they proved that there were absolute
constants $c_r$ and $C_r$ for which
\begin{displaymath}
  c_r m^{2/(2r-1)} \leq \ISO_r(m) \leq C_r m^{2/(r+1)} \cdot \frac{\log m}{\log \log m} \,.
\end{displaymath}
Their upper bound construction is based on an appropriately chosen
random $r$-uniform hypergraph. For graphs ($r=2)$, the powers of $m$
coincide at $m^{2/3}$, so their lower bound deviated only by a
logarithmic factor from their upper bound construction, which was
essentially the Erd\H{o}s-R\'enyi random graph.  At around the same
time, similar results were obtained independently by Alon and Krasikov
(unpublished), and by Gould and R\"odl. The latter group determined in \cite{GR} that
for 3-uniform hypergraphs, $\ISO_3(m) \geq \frac{1}{23} \sqrt{m}$,
which matched the upper bound exponent, but again fell short by a
logarithmic factor.  Very recently, Horn, Koubek, and R\"odl
\cite{HKR} announced lower bounds for $\ISO_4(m)$, $\ISO_5(m)$, and
$\ISO_6(m)$ which also came within poly-logarithmic factors of the
corresponding upper bounds derived from random hypergraphs.

The main result of our paper completely solves the graph case,
determining the asymptotic rate of growth of $\ISO(m) = \ISO_2(m)$.

\begin{THM}
  \label{thm:main}
  There are absolute constants $c$ and $C$ for which
  \begin{displaymath}
    c (m \log m)^{2/3}
    <
    \ISO(m)
    <
    C (m \log m)^{2/3} \,.
  \end{displaymath}
\end{THM}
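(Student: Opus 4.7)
The plan is to prove matching upper and lower bounds on $\ISO(m)$, with the heart of the matter being the lower bound.

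For the upper bound, I would consider a random graph $G = G(n, 1/2)$, which has $m = (1+o(1))\binom{n}{2}/2$ edges asymptotically almost surely, and show that a.a.s.\ $\ISO(G) \leq C(m\log m)^{2/3}$ for a suitable constant $C$. This is a first-moment argument: enumerate triples $(H_1, H_2, \phi)$ where $H_1, H_2$ are edge-disjoint $s$-edge graphs on some subset of $[n]$ and $\phi: H_1 \to H_2$ is an isomorphism; the probability both $H_1, H_2$ are subgraphs of $G(n,1/2)$ is $2^{-2s}$. Organizing the sum by $v = |V(H_1) \cup V(H_2)|$ and by isomorphism type, and dividing by $|\mathrm{Aut}(H_1)|$ to avoid overcounting, one shows the expectation tends to $0$ once $s > C(m\log m)^{2/3}$. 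This refines the Erdős-Pach-Pyber computation, whose sub-optimal factor of $\log m/\log\log m$ comes from handling symmetries too crudely; one expects the extremal pair to be a disjoint union of small stars, since this structure minimizes the number of vertices per edge while maximizing automorphisms.

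For the lower bound, given a graph $G$ with $m$ edges, the plan is to find two edge-disjoint isomorphic subgraphs of size $\Omega((m\log m)^{2/3})$ via case analysis on degree structure. First, by Erdős's theorem, I would pass to a subgraph $G'$ with minimum degree at least half the average degree, losing only a constant factor in edge count. Then I would split based on the degree profile: if a single degree value $d$ is shared by many vertices of $G'$, build the two subgraphs as disjoint unions of stars centered at these vertices, using a greedy extraction together with a random selection of the leaves to ensure edge-disjointness while keeping the two halves combinatorially identical; if instead $G'$ has a small set of very high-degree vertices, exploit them directly by pairing vertices of equal (or nearly equal) degree and matching their neighborhoods, carving out two isomorphic bipartite subgraphs.

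The $(\log m)^{2/3}$ improvement over the naive $m^{2/3}$ bound is the crux. It should come from a probabilistic amplification in the star-packing step, reminiscent of dependent random choice: randomly select a set $S$ of potential centers from the large common-degree class, then argue that a positive-probability choice of $S$ yields many vertices whose neighborhoods can be matched into the two isomorphic halves while simultaneously controlling edge-overlap. The main obstacle will be calibrating the case analysis so that every degree regime gives the same bound $\Omega((m\log m)^{2/3})$; the worst case for the lower bound should mimic the random-graph example used for the upper bound, so both sides of the argument must be sharp at essentially the same configuration, and ensuring that the two extracted halves are \emph{genuinely} isomorphic --- rather than just similar --- will require delicate bookkeeping on the auxiliary matchings and random selections.
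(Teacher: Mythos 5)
Your upper bound construction does not work. The dense random graph $G(n,\tfrac12)$ has $m=\Theta(n^2)$ edges, and already the elementary volume bound (take a random bijection of the vertex set; each pair of edges coincides with probability about $n^{-2}$) gives $\ISO(G(n,\tfrac12))=\Omega(m^2/n^2)=\Omega(n^2)=\Omega(m)$, which is far larger than $(m\log m)^{2/3}=\Theta(n^{4/3}(\log n)^{2/3})$. So the first-moment expectation you propose to drive to $0$ at $s=C(m\log m)^{2/3}$ cannot tend to $0$ there --- the subgraphs genuinely exist --- and no refinement by automorphism classes can rescue a false statement. The extremal example sits at the density where the two lower-bound mechanisms (star forests giving $\Omega(n)$, and the volume bound giving $\Omega(m^2/n^2)$) balance: one must take $G_{n,p}$ with $p=\sqrt{\log n/n}$, i.e.\ $n=\Theta\bigl(m^{2/3}/(\log m)^{1/3}\bigr)$, and there the completely naive first moment $\binom{\binom{n}{2}}{t}\,n!\,p^{2t}$ already gives the bound $t=O(n\log n)=O((m\log m)^{2/3})$. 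The improvement over Erd\H{o}s--Pach--Pyber comes from this modified edge probability, not from a finer treatment of symmetries.

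The lower bound sketch is missing the mechanism that produces the $(\log m)^{2/3}$ gain, which is the actual content of the theorem. The argument that works is: split the vertices into $A_1\cup B_1\cup A_2\cup B_2$, extract in each $G_i=G[A_i,B_i]$ many vertex-disjoint $d$-star-forests with centers in $A_i$, fix a \emph{uniformly random} bijection $f_B:B_1\to B_2$, and then match centers greedily. For one fixed pair of centers $(v_1,v_2)$ the overlap $|f_B(N(v_1))\cap N(v_2)|$ is asymptotically Poisson with constant mean, so it exceeds $\varepsilon'\log n/\log\log n$ only with the polynomially small probability $n^{-\varepsilon}$; the point is that a single center $v_1$ has $n^{\Omega(1)}$ candidate partners $v_2$ with pairwise disjoint neighborhoods, so these rare events are independent and one of them occurs a.a.s. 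Iterating over a constant fraction of the centers yields $\Omega(n\log n/\log\log n)$ overlapping edges. Your ``dependent random choice on a random set of centers'' does not describe this; the randomness that matters is in the bijection on the leaf side, and the amplification is a large-deviation event boosted by many disjoint candidates, not a common-neighborhood argument. Two further points: your worry about the two halves being ``genuinely isomorphic'' dissolves once you phrase everything in terms of a vertex bijection $f$ and take the edge sets $f(G_1)\cap G_2$ and its preimage, which are automatically edge-disjoint and isomorphic; and the case analysis is most cleanly run as an iteration that either wins outright (too many vertices: star forests; too few: volume bound; half the vertices of degree $\ge d$: the bijection argument) or passes to the subgraph induced by high-degree vertices while losing a geometrically decaying fraction of the edges.
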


The key idea is to exploit rare large deviations events through a
constructive algorithm, rather than to attempt to erase them with
union bounds. Incidentally, our upper bound construction is still
based on a random graph, but with a slightly modified edge
probability.

Inspired by the asymptotic optimality of random graphs in the problem
of Jacobson and Sch\"onheim, our next result explicitly studies the
self-similarity of random graphs.  The Erd\H{o}s-R\'enyi
random graph $\Gnp$ is constructed on the vertex set $[n] = \{1,
\ldots, n\}$ by taking each potential edge independently with
probability $p$.  We say that $\Gnp$ possesses a graph property
$\mathcal{P}$ {\em asymptotically almost surely}, or a.a.s.~for
brevity, if the probability that $\Gnp$ possesses $\mathcal{P}$ tends
to $1$ as $n$ grows to infinity. Since its first appearance in the
1960's, this beautiful object has been a central topic of study in
graph theory.  Surprisingly, many problems about random graphs arose
from research in various other areas of mathematics and theoretical
computer science. Yet despite the great amount of work devoted to this
topic over the past fifty years, many interesting unresolved questions
still remain to be answered.  For more on random graphs, we refer the
reader to the books \cite{Bollobas, JaLuRu}.

When $p < \frac{0.99}{n}$, it is well known that a.a.s.~all connected
components of $\Gnp$ are either trees or unicyclic (are trees with a
single additional edge).  Applying the previously mentioned
result of Alon, Caro, and Krasikov, or even Proposition
\ref{prop:starforest} below, it is then easy to see that the
self-similarity of $\Gnp$ in that regime is $\Theta(m)$ a.a.s., where
$m$ is the number of edges.  Our second result asymptotically
determines $\ISO(\Gnp)$ for the remaining range of $p$.

\begin{THM} \label{thm:randomgraph}
  \mbox{}

  \begin{enumerate}[(i)]
    \setlength{\itemsep}{1pt}
    \setlength{\parskip}{0pt} \setlength{\parsep}{0pt}
  \item If $\frac{1}{2n} \le p(n) \le \frac{1}{e^6}\sqrt{\frac{\log
        n}{n}}$, then $\ISO(\Gnp) = \Theta\left(n \cdot \frac{\log
        n}{\log \gamma(n)} \right)$ a.a.s., where $\gamma(n) =
    \frac{1}{p}\sqrt{\frac{\log n}{n}}$.
  \item If $p(n) > \frac{1}{e^6}\sqrt{\frac{\log n}{n}}$, then
    $\ISO(\Gnp) = \Theta(n^2p^2)$ a.a.s.
  \end{enumerate}
\end{THM}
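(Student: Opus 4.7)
I would prove the two parts separately, each requiring matching upper and lower bounds. For both upper bounds, the strategy is a union bound over all candidate isomorphism witnesses---triples $(U, V, \phi)$ with $|U| = |V| = k$ and a bijection $\phi: U \to V$. For each such triple, a Chernoff estimate controls the probability of having many ``matched edge pairs'' $\{e, \phi(e)\}$ simultaneously in $\Gnp$; optimizing over $k$ against the combinatorial count $\binom{n}{k}^2 k! \le (en)^{2k}$ of triples gives the claimed upper bounds in both cases.

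\textbf{Case (ii).} For the lower bound $\Omega(n^2 p^2)$, I partition $V = A \sqcup B \sqcup C$ into three parts of size $\Theta(n)$ and count, for a bijection $\phi: A \to B$, the matched pairs $(a, c) \in A \times C$ with both $ac, \phi(a) c \in E(\Gnp)$. A uniform random $\phi$ yields $\Theta(n^2 p^2)$ matched pairs in expectation, so some $\phi$ attains this count; the matched edges in $\Gnp[A, C]$ together with their $\phi$-images in $\Gnp[B, C]$ form two edge-disjoint isomorphic subgraphs via $\phi \cup \mathrm{id}_C$. The upper bound then follows from the union-bound scheme above: for each triple the probability of $\ge s$ matches is at most $(e C k^2 p^2 / s)^s$, and summing over $k$ shows $s \gg n^2 p^2$ is infeasible once $n^2 p^2 \gtrsim n \log n$, exactly the range defining case (ii).

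\textbf{Case (i).} The upper bound again comes from the union bound; here the smaller $p$ makes $p^{2s}$ much smaller, and a rebalancing of the probability exponent against the entropy $2k \log(n/k)$ yields the tight bound $s = O(n \log n / \log \gamma)$ after plugging in $np^2 = \log n / \gamma^2$. For the lower bound, I would construct two edge-disjoint isomorphic subgraphs by pairing many vertices of nearly-equal degree as centers of isomorphic stars, calibrating the total contribution across degree levels to the $\bin(n,p)$ tail profile. Handling shared leaves and center--center edges (by choosing a near-independent set of centers or by deleting a few conflicting edges) must be done carefully to ensure both edge-disjointness and an abstract isomorphism of the two aggregated subgraphs.

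\textbf{Main obstacle.} The hardest step is the lower bound of case (i). Naive constructions---disjoint matchings, pairs of Hamiltonian paths, or vertex-disjoint star forests---yield only $\Theta(n)$ edges, which is insufficient as $p$ approaches $\sqrt{\log n / n}$ and the target grows toward order $n \log n / \log \log n$. Achieving the tight factor forces the stars to \emph{share} leaves, so the two candidate subgraphs become genuine bipartite subgraphs of $\Gnp$ rather than disjoint unions of stars; the construction then amounts to exhibiting a pair of vertex-set bijections aligning an $\Omega(n \log n / \log \gamma)$-edge sub-bipartite-graph of one side with one of the other, precisely the quantity the union bound shows is extremal.
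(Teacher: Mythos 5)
Your upper bounds and your lower bound for part (ii) follow essentially the paper's route. The upper bound is the same first-moment computation: the paper counts $2t$-edge configurations $H \cup \pi(H)$ via $\binom{\binom{n}{2}}{t}\, n!\, p^{2t}$ and optimizes $t$ in each regime (your vertex-subset/bijection bookkeeping is a cosmetic variant, valid because the two copies must be edge-disjoint so the $p^{2s}$ factor is legitimate). The lower bound for (ii) is exactly Proposition \ref{prop:pair-volume} and Corollary \ref{cor:volume}: a uniformly random bijection between the two sides of a bipartite split overlaps $\Theta(n^2p^2)$ edges in expectation. These parts are fine.

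The genuine gap is in the lower bound of part (i), and you have correctly located it but not filled it. Pairing centers of nearly equal degree cannot supply the missing factor: in this range the degrees of $\Gnp$ are concentrated around $np$, and the obstruction is not the degree sequence but the requirement that the \emph{shared leaves} be consistently matched. Once the stars share leaves, the two candidate subgraphs are bipartite graphs $G_1$ between $A_1,B_1$ and $G_2$ between $A_2,B_2$, and the quantity to maximize is $|f(G_1)\cap G_2|$ over bijections $f$. A uniformly random $f$ gives only $\Theta(n^2p^2)=\Theta\bigl(\frac{n\log n}{\gamma^2}\bigr)$, far below the target $\Theta\bigl(\frac{n\log n}{\log\gamma}\bigr)$; your proposal observes this but offers no mechanism to beat it. The paper's key idea (Lemmas \ref{lem:poisson_random} and \ref{lem:mapping_random}) is to build $f$ greedily rather than uniformly: first fix a uniform bijection $B_1\to B_2$, then map the vertices of $A_1$ one at a time. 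For a fixed $v_1\in A_1$, the image $f(N(v_1))$ is a random $p$-subset of $B_2$, and against each of $\Omega(n^{1/3})$ candidate vertices $v_2\in A_2$ with pairwise disjoint neighborhoods of size $\Theta(np)$ (extracted as a star forest), the overlap $|f(N(v_1))\cap N(v_2)|$ is approximately Poisson with constant mean, hence exceeds $\frac{\log n}{20\log\gamma}$ with probability at least $n^{-1/4}$; with that many independent disjoint candidates, some $v_2$ succeeds with probability $1-e^{-\Omega(n^{1/12})}$, and one sets $f(v_1)=v_2$. Iterating over a constant fraction of $A_1$ yields $\Omega\bigl(n\cdot\frac{\log n}{\log\gamma}\bigr)$ overlapping edges. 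This exploitation of rare large-deviation events through a constructive choice of the bijection is the actual content of the theorem and is absent from your plan. (A minor further point: the paper treats $\frac{1}{2n}\le p\le n^{-21/40}$ separately, where $\log\gamma=\Theta(\log n)$ and the trivial $\Theta(n)$ bound from Proposition \ref{prop:starforest} already matches.)
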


We will prove this theorem in the next section.  Its proof illustrates the main ideas of the
argument for Theorem \ref{thm:main}, which follows in Section
\ref{sec:general}.

\medskip

\noindent \textbf{Notation.} Let $G$ be a graph with vertex set
$V$. For a subset of vertices $X \subset V$, let $G[X]$ be the
subgraph of $G$ induced by $X$. For a vertex $v \in V$, we use $N(v)$ 
to denote the set of neighbors of $v$. Given a bijection $f : V \rightarrow
V'$, let $f(G)$ be the graph with vertex set $V'$, where $x',y' \in
V'$ are adjacent if and only if there exist two adjacent vertices $x,y
\in V$ such that $f(x) = x'$ and $f(y) = y'$. For two graphs $G_1$ and
$G_2$ defined on the same vertex set, let $G_1 \cup G_2$ be the graph
obtained by taking the union of the edge sets of the two graphs, and
let $G_1 \cap G_2$ be the graph obtained by taking the intersection of
the edge sets of the two graphs.

The following standard asymptotic notation will be utilized
extensively.  For two functions $f(n)$ and $g(n)$, we write $f(n) =
o(g(n))$ if $\lim_{n \rightarrow \infty} f(n)/g(n) = 0$, and $f(n) =
O(g(n))$ or $g(n) = \Omega(f(n))$ if there exists a constant $M$ such
that $|f(n)| \leq M|g(n)|$ for all sufficiently large $n$.  We also
write $f(n) = \Theta(g(n))$ if both $f(n) = O(g(n))$ and $f(n) =
\Omega(g(n))$ are satisfied.  All logarithms will be in base $e
\approx 2.718$.

\section{Random graphs}
\label{sec:random}

We will use the following well-known concentration result, which is a
consequence of Theorems A.1.11 and A.1.13 in the book \cite{AlSp}. Let
$\bin(n,p)$ denote the binomial random variable with parameters $n$
and $p$.

%

\begin{THM} \label{thm:chernoffinequality}
If $X \sim \bin(n,p)$ and $\lambda \leq np$, then
\begin{displaymath}
  \pr{ |X - np| \geq \lambda }
  \leq 
  e^{-\frac{\lambda^2}{15 np}} \,.
\end{displaymath}
\end{THM}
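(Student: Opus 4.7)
The plan is to derive the bound by the classical Cram\'er--Chernoff exponential moment method. Writing $X = X_1 + \cdots + X_n$ as a sum of independent Bernoulli$(p)$ variables, one has for every real $t$
\[
\E{e^{tX}} \;=\; (1-p+pe^t)^n \;\leq\; \exp\bigl(np(e^t-1)\bigr),
\]
using $1+x \leq e^x$ factor by factor. This is the standard starting point.

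For the upper tail I would apply Markov's inequality to $e^{tX}$ at the optimal choice $t = \log(1+\lambda/(np)) > 0$, which after simplification yields
\[
\pr{X - np \geq \lambda} \;\leq\; \exp\bigl(-np \cdot \psi(\lambda/np)\bigr),
\]
where $\psi(x) = (1+x)\log(1+x) - x$. A short calculus check gives $\psi(x) \geq x^2/3$ on $[0,1]$; combined with the hypothesis $\lambda/np \leq 1$, this yields $\pr{X - np \geq \lambda} \leq e^{-\lambda^2/(3np)}$. The mirror argument with $t<0$ handles the lower tail and in fact produces a slightly stronger estimate $\pr{X - np \leq -\lambda} \leq e^{-\lambda^2/(2np)}$.

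It then remains to merge the two one-sided bounds into the claimed symmetric form. Summing them gives $\pr{|X-np| \geq \lambda} \leq 2e^{-\lambda^2/(3np)}$, and the loose constant $15$ (in place of $3$) is deliberately chosen so as to absorb both the leading factor of $2$ and any small-$\lambda$ slack: when $\lambda^2/(np)$ exceeds a small absolute constant, the inequality $e^{4\lambda^2/(15np)} \geq 2$ holds and the absorption is automatic; in the complementary small-$\lambda$ regime, the stated bound follows directly from the trivial estimate $\pr{|X-np|\geq\lambda}\leq 1$ together with the fact that the RHS is bounded away from $0$. Alternatively, as the preamble to the theorem indicates, one can simply invoke Theorems A.1.11 and A.1.13 of Alon--Spencer and combine them.

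The main obstacle is purely bookkeeping: verifying the elementary calculus inequality $\psi(x) \geq x^2/3$ on $[0,1]$, and checking that $15$ is wide enough to make a single clean two-sided statement. The looseness of the constant is intentional, since this concentration estimate will be applied many times in later sections and a symmetric form with no case distinctions is far more convenient than tight constants.
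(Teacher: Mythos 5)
The paper itself offers no proof of this theorem beyond the remark that it is a consequence of Theorems A.1.11 and A.1.13 of Alon--Spencer, so your self-contained exponential-moment derivation is the natural thing to write out, and your two one-sided estimates are correct: $\psi(x)=(1+x)\log(1+x)-x$ indeed satisfies $\psi(x)\ge \frac{x^2}{2(1+x/3)}\ge \frac{x^2}{3}$ on $[0,1]$, so $\pr{X-np\ge\lambda}\le e^{-\lambda^2/(3np)}$ for $\lambda\le np$, and the lower tail gives $e^{-\lambda^2/(2np)}$.

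The genuine gap is in the merging step. Summing the one-sided bounds gives $2e^{-\lambda^2/(3np)}$, and $2e^{-\lambda^2/(3np)}\le e^{-\lambda^2/(15np)}$ holds precisely when $\frac{\lambda^2}{np}\ge\frac{15\log 2}{4}\approx 2.6$. In the complementary regime you invoke $\pr{|X-np|\ge\lambda}\le 1$, but there the right-hand side $e^{-\lambda^2/(15np)}$ is \emph{strictly less than} $1$, so the trivial estimate proves nothing; being bounded away from $0$ is not the same as being at least $1$. Moreover, no amount of bookkeeping can close this: the inequality as literally stated fails for small $\lambda$. Take $n=1$, $p=\tfrac12$, $\lambda=\tfrac12=np$; then $|X-np|=\tfrac12$ with probability $1$, while $e^{-\lambda^2/(15np)}=e^{-1/30}<1$. (More generally the left side equals $1$ whenever $\lambda$ is smaller than the distance from $np$ to the nearest integer.) The statement is really only valid --- and is only ever applied in the paper --- when $\lambda^2/(np)$ is large: every invocation has $\lambda=\Omega(np)$ or $\lambda$ at least a large multiple of $\sqrt{np}$, with the exponent tending to infinity, and in that regime your computation with the generous constant $15$ does go through. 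The honest fix is therefore to add a hypothesis such as $\lambda\ge 2\sqrt{np}$ (or to retain the factor $2$ in front of the exponential), rather than to hope that the slack in the constant absorbs the small-deviation case.
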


We begin by analyzing the self-similarity of random graphs.  In
addition to being an interesting question in its own right, this
investigation also suggests good intuition for general graphs.  The
upper bounds on $\ISO(\Gnp)$ follow from relatively straightforward
union bounds.

\begin{proof}[Proof of upper bound in Theorem \ref{thm:randomgraph}.]

  Suppose that we are seeking a pair of edge-disjoint isomorphic
  subgraphs with $t$ edges.  This task is equivalent to finding
  subgraphs $H'$ with $2t$ edges that can be partitioned into the
  union $H \cup \pi(H)$, for some $t$-edge subgraph $H$ and a
  permutation $\pi$ of the vertex set.  The expected number of such
  subgraphs $H'$ in $\Gnp$ is at most
  \begin{align} \label{eq:upperbound}
    {{n \choose 2} \choose t} \cdot n! \cdot p^{2t} 
    <
    \left(\frac{en^2p^2}{t}\right)^t e^{n\log n} \,,
  \end{align}
  where the first binomial coefficient counts the number of ways to select
  $t$ edges for $H$ out of all $\binom{n}{2}$ available, and the $n!$
  bounds the number of permutations $\pi$ of the vertex set.  Together,
  these choices determine the $2t$ edges which make up $H'$, which appear
  with probability $p^{2t}$.  Thus, if we select a value of $t$ for which
  the right hand side of \eqref{eq:upperbound} becomes $o(1)$, we will
  establish that the number of such $H'$ is zero a.a.s., and hence
  $\ISO(\Gnp) < t$ a.a.s.

  We separately specify suitable choices for $t$ for the two regimes
  of $p$ that we consider in this theorem.  For part (i), where
  $\frac{1}{2n} \le p \le \frac{1}{e^6}\sqrt{\frac{\log n}{n}}$, we
  use $t = \frac{n \log n}{\log \gamma}$, where $\gamma =
  \frac{1}{p} \sqrt{\frac{\log n}{n}}$.  Note that in this range we
  have $e^6 \le \gamma \le 2\sqrt{n\log n}$.  Then the right hand side
  of \eqref{eq:upperbound} becomes
  \begin{displaymath}
    \left(
      \frac{en^2\cdot \frac{\log n}{\gamma^2 n}}
      {\frac{n \log n}{\log \gamma}} 
    \right)^{\frac{n\log n}{\log \gamma}} e^{n\log n}
    = 
    \left(
      \frac{e \log \gamma}{\gamma^2}
    \right)^{\frac{n\log n}{\log \gamma}} e^{n\log n}
    = 
    e^{-\frac{n\log n}{\log \gamma} \cdot \log\left(\frac{\gamma^2}{e \log \gamma}\right)} \cdot e^{n\log n}.
  \end{displaymath}
  Since $\gamma \ge e^6$, we have $\log \left(\frac{\gamma^2}{e \log
      \gamma}\right) > \frac{3}{2}\log \gamma$, and hence the right
  hand side of \eqref{eq:upperbound} is at most $e^{-\frac{3}{2}n\log
    n}\cdot e^{n\log n} = o(1)$.

  For part (ii), where $p \ge \frac{1}{e^6}\sqrt{\frac{\log n}{n}}$,
  we specify $t = e^{12} n^2p^2$.  The right hand side of
  \eqref{eq:upperbound} then becomes
  \begin{displaymath}
    \left(
      \frac{1}{e^{11}} 
    \right)^{e^{12}n^2p^2} e^{n\log n}
    \leq 
    \left(
      \frac{1}{e^{11}} 
    \right)^{n\log n} e^{n\log n}  
    = 
    o(1) \,.
  \end{displaymath}
\end{proof}

The remainder of this section is devoted to constructing large
self-similar subgraphs in $\Gnp$.  The structure given in the
following definition turns out to be extremely useful (both for this
section and the next section).

\begin{DEF} 
  \label{def:batches} 
  Let $d$ and $k$ be positive integers.
  \begin{enumerate}[(i)]
    \setlength{\itemsep}{1pt} \setlength{\parskip}{0pt}
    \setlength{\parsep}{0pt}
  \item A \textbf{$\boldsymbol{d}$-star} is a graph consisting of
    $d+1$ vertices and $d$ edges, where one of the vertices has degree
    $d$. We sometimes simply refer to these graphs as \textbf{stars}.
  \item A \textbf{$\boldsymbol{(d,k)}$-star-forest} is a collection of
    $k$ vertex-disjoint $d$-stars.  We denote a $(d,k)$-star-forest by
    the set of pairs $\{(v,N_v) : v \in B\}$, where $B$ is a set of
    $k$ vertices, and for each $v$, the set $N_v \subset N(v)$ is a
    disjoint set of $d$ neighbors of $v$.
  \end{enumerate}
\end{DEF}

The following two propositions were the key ideas in \cite{EPP}.  We
include their proofs for completeness, as well as to illuminate the
points at which we introduce our new arguments.  The first claim
asserts that the self-similarity of a graph is large if there are many
non-isolated vertices.

\begin{PROP} \label{prop:starforest}
Let $G$ be a graph on $n$ vertices with no isolated vertices.
Then $\ISO(G) \ge \frac{n-2}{4}$.
\end{PROP}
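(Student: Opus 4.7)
The plan is to produce a spanning subgraph of $G$ that is a vertex-disjoint union of stars and to split its edges into two edge-disjoint isomorphic halves.

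Since $G$ has no isolated vertex, it admits a minimum edge cover $F$, and a standard observation (any $P_4$-component or triangle-component would contain a removable edge) shows that every component of $F$ is a star. Let the $k$ components of $F$ be $d_1$-, $d_2$-, \ldots, $d_k$-stars, so that a lone edge counts as a $1$-star. Since $F$ is spanning, $\sum_i(d_i+1) = n$ and thus $\sum_i d_i = n-k$.

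I would build $F_1$ and $F_2$ as follows. For each component of size $d_i \ge 2$, split its edges at the common center into two sub-stars of size $\lfloor d_i/2\rfloor$ (discarding one edge when $d_i$ is odd), placing one sub-star into each of $F_1, F_2$. For the $t$ components of size $1$, distribute them evenly, so each $F_j$ receives $\lfloor t/2\rfloor$ of them (and one is discarded when $t$ is odd). Then $F_1$ and $F_2$ are edge-disjoint and have the same multiset of component sizes (namely $\lfloor t/2\rfloor$ isolated edges together with one $\lfloor d_i/2\rfloor$-star for each $i$ with $d_i\geq 2$), so they are isomorphic as graphs. A direct count gives
\[
|E(F_j)| \;=\; \Bigl\lfloor \tfrac{t}{2}\Bigr\rfloor + \sum_{d_i\ge 2}\Bigl\lfloor \tfrac{d_i}{2}\Bigr\rfloor \;\ge\; \frac{n - k - 1 - J_o}{2},
\]
where $J_o$ denotes the number of stars of odd size at least $3$.

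The one step that I expect to be the main obstacle is the inequality $k+J_o\le n/2$. Writing $J_i$ for the number of stars of size exactly $i$, one has $k=\sum_i J_i$, $J_o=\sum_{i\text{ odd},\,i\ge 3}J_i$, and the spanning identity $n=\sum_i(i+1)J_i$. Subtracting $2(k+J_o)$ from $n$ term by term produces
\[
n - 2(k+J_o) \;=\; J_2 + 3J_4 + 2J_5 + 5J_6 + 4J_7 + 7J_8 + \cdots \;\ge\; 0,
\]
since every coefficient is a non-negative integer. Substituting $k+J_o\le n/2$ into the previous display yields $|E(F_j)| \ge (n/2-1)/2 = (n-2)/4$, which establishes the claimed bound on $\ISO(G)$.
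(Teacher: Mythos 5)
Your proof is correct and follows essentially the same route as the paper: reduce to a spanning star forest (the paper obtains it by greedily deleting edges joining two vertices of degree at least two, you by taking a minimum edge cover — both standard), split each star into two equal halves, pair up the single edges, and count. The only cosmetic difference is the final bookkeeping: the paper amortizes per star via $\lfloor d/2\rfloor/(d+1)\ge \tfrac14$ for $d\ge 2$, while you prove the global inequality $k+J_o\le n/2$; both give $(n-2)/4$.
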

\begin{proof}
  We first prove that $G$ contains vertex-disjoint stars that cover
  all the vertices of the graph. Given a graph $G$, iteratively remove
  edges that connect two vertices of degree at least two (in an
  arbitrary order). Clearly, this process never creates isolated
  vertices, and the final graph consists only of stars because all
  remaining vertices of degree two or more are non-adjacent.

  It remains to show that any $n$-vertex star forest contains two
  large edge-disjoint isomorphic subgraphs $G_1$ and $G_2$. We
  consider the stars in the forest by their type.  Note that 1-stars
  are nothing more than single edges, so for every two 1-stars, we can
  put one of them in $G_1$ and the other in $G_2$.  We account for
  this as a contribution of $+1$ toward $\ISO(G)$ from the four
  vertices in the two 1-stars.  On the other hand, for $d \ge 2$, we
  can split the edges of every $d$-star into two sets of size
  $\left\lfloor \frac{d}{2} \right\rfloor$, possibly with one edge
  left over.  By adding one part to $G_1$ and the other to $G_2$, we
  see that the $d+1$ vertices of each $d$-star contribute $+
  \left\lfloor \frac{d}{2} \right\rfloor$ to $\ISO(G)$.  Accumulating
  the contributions from all vertices, except possibly for at most two
  vertices from a single unpaired $1$-star, we find that
  \begin{displaymath}
    \ISO(G)
    \ge 
    (n-2) \cdot \min \left\{
      \frac{1}{4}, \,\, \min_{d \ge 2} \left\{ 
        \frac{\lfloor d/2 \rfloor}{d+1} 
      \right\} 
    \right\} 
    = 
    \frac{n-2}{4} \,.
  \end{displaymath}
\end{proof}

Although our problem considers the self-similarity within a single
graph, our lower bound argument first separates the given graph into
two disjoint subgraphs, and constructs a suitable mapping between
them which overlaps many edges.

\begin{DEF}
  \label{def:map_gives_iso}
  Let $G_1$ and $G_2$ be two edge-disjoint graphs, on possibly
  overlapping vertex sets $V_1$ and $V_2$ of the same cardinality.
  Let their \textbf{similarity} $\ISO(G_1, G_2)$ be the maximum
  integer $s$ such that there exists a bijection $f : V_1 \rightarrow
  V_2$ for which $f(G_1) \cap G_2$ contains $s$ edges.
\end{DEF}


The next proposition uses a random mapping as the input in
Definition \ref{def:map_gives_iso}, in order to measure similarity
of two random bipartite graphs.

\begin{PROP} \label{prop:pair-volume} For $i=1,2$, let $G_i$ be
  edge-disjoint bipartite graphs with parts $A_i$ and $B_i$, where
  $|A_1| = |A_2| = n_1$ and $|B_1| = |B_2| = n_2$.  Suppose that $A_1
  \cup A_2$ and $B_1 \cup B_2$ are disjoint, but $A_1$ may intersect
  $A_2$ and $B_1$ may intersect $B_2$.  Then $\ISO(G_1, G_2) \ge
  \frac{|E(G_1)| |E(G_2)|}{n_1 n_2}$.
\end{PROP}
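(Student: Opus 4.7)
The plan is a first-moment argument: I will choose a random bijection $f : V_1 \to V_2$ that respects the bipartitions and compute the expected size of $f(G_1) \cap G_2$; by averaging, some bijection attains at least the expected value, which will match the claimed bound.

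Concretely, since $A_1 \cup A_2$ and $B_1 \cup B_2$ are disjoint, I can define $f$ by drawing $f_A : A_1 \to A_2$ and $f_B : B_1 \to B_2$ independently and uniformly at random from all bijections between the respective pairs of sets, and then setting $f = f_A$ on $A_1$ and $f = f_B$ on $B_1$. This is a genuine bijection $V_1 \to V_2$, even though $A_1 \cap A_2$ and $B_1 \cap B_2$ may be nonempty, and it lies in the set over which $\ISO(G_1,G_2)$ is maximized, so it is legal.

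Next I apply linearity of expectation. Fix an edge $e_1 = a_1 b_1 \in E(G_1)$ with $a_1 \in A_1, b_1 \in B_1$, and an edge $e_2 = a_2 b_2 \in E(G_2)$ with $a_2 \in A_2, b_2 \in B_2$. The edge $f(e_1)$ coincides with $e_2$ exactly when $f_A(a_1) = a_2$ and $f_B(b_1) = b_2$; since $f_A$ and $f_B$ are independent uniform bijections on sets of sizes $n_1$ and $n_2$ respectively, the marginal probabilities are $1/n_1$ and $1/n_2$, and they are independent, so $\Pr[f(e_1) = e_2] = \frac{1}{n_1 n_2}$. Summing over the $|E(G_1)| \cdot |E(G_2)|$ ordered pairs of edges, the expected number of edges in $f(G_1) \cap G_2$ is exactly $\frac{|E(G_1)||E(G_2)|}{n_1 n_2}$, so some deterministic bijection achieves at least this value.

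There is no serious obstacle here: the only point that needs a moment's care is checking that products of uniform bijections on $A_1 \to A_2$ and $B_1 \to B_2$ truly give independence of the two events $\{f(a_1)=a_2\}$ and $\{f(b_1)=b_2\}$, which follows from the disjointness hypothesis $A_i \cap B_j = \emptyset$ that lets us vary the two bijections independently. The edge-disjointness of $G_1$ and $G_2$ plays no role in the computation itself; it is part of the setup of $\ISO(G_1,G_2)$.
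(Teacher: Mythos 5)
Your proof is correct and is essentially identical to the paper's: the paper also samples independent uniform bijections $A_1 \to A_2$ and $B_1 \to B_2$, computes the probability $\frac{1}{n_1 n_2}$ that a fixed edge of $G_1$ maps onto a fixed edge of $G_2$, and concludes by linearity of expectation. No further comment is needed.
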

\begin{proof}
  Independently sample uniformly random bijections from $A_1$ to $A_2$ and
  from $B_1$ to $B_2$, and let $f$ be their combination. For each pair of
  edges $e_1 \in E(G_1)$ and $e_2 \in E(G_2)$, the probability that $e_1$
  gets mapped to $e_2$ by $f$ is exactly $\frac{1}{n_1 n_2}$.  Such a
  situation contributes $+1$ to the intersection size $f(G_1) \cap G_2$.
  Therefore, by linearity of expectation, the expected number of edges in
  $f(G_1) \cap G_2$ is at least $\frac{|E(G_1)| |E(G_2)|}{n_1 n_2}$, and
  there exists a suitable $f$ which achieves that bound.
\end{proof}

\begin{COR} \label{cor:volume} Let $G$ be a bipartite graph with parts
  $A$ and $B$ such that $|E(G)| \ge 10$.  Then $\ISO(G) \ge
  \frac{|E(G)|^2}{5 |A| |B|}$.
\end{COR}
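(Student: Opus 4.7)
The plan is to reduce directly to Proposition \ref{prop:pair-volume} by deterministically splitting the edges of $G$ into two halves of (nearly) equal size, then noticing that any good bijection witnessing their similarity produces a pair of edge-disjoint isomorphic subgraphs of $G$.

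Concretely, set $m = |E(G)|$ and fix an arbitrary partition $E(G) = E_1 \sqcup E_2$ with $|E_1| = \lfloor m/2 \rfloor$ and $|E_2| = \lceil m/2 \rceil$. Let $G_i$ be the bipartite graph on parts $A$ and $B$ with edge set $E_i$. Applying Proposition \ref{prop:pair-volume} with $A_1 = A_2 = A$, $B_1 = B_2 = B$ (the disjointness hypothesis $A \cap B = \emptyset$ comes for free from bipartiteness of $G$), I obtain a bijection $f : A \cup B \to A \cup B$ preserving the bipartition such that
\[
  \bigl|f(G_1) \cap G_2\bigr|
  \;\ge\; \frac{|E_1|\,|E_2|}{|A|\,|B|}
  \;\ge\; \frac{\lfloor m/2 \rfloor \lceil m/2 \rceil}{|A|\,|B|}
  \;\ge\; \frac{m^2 - 1}{4\,|A|\,|B|}.
\]

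The $s$ edges of $f(G_1) \cap G_2$ and their $f$-preimages in $G_1$ are two edge-disjoint subgraphs of $G$ (edge-disjointness is automatic because $G_1$ and $G_2$ partition $E(G)$), and the restriction of $f$ to the endpoints witnesses an isomorphism between them. Hence $\ISO(G) \ge s \ge (m^2-1)/(4|A||B|)$. A routine check that $m \ge 10$ implies $(m^2-1)/4 \ge m^2/5$ completes the proof; the slack between $1/4$ and $1/5$ is exactly the room bought by the assumption $|E(G)| \ge 10$.

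There is really no main obstacle here: both the halving of edges and the passage from $\ISO(G_1, G_2)$ to $\ISO(G)$ are essentially bookkeeping. The only step that requires a moment of care is verifying that any bijection furnished by Proposition \ref{prop:pair-volume} indeed produces edge-disjoint isomorphic subgraphs inside the single graph $G$, which follows at once because $G_1$ and $G_2$ were built to be edge-disjoint subgraphs of $G$ on the same vertex set.
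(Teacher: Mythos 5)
Your proposal is correct and follows essentially the same route as the paper: split the edge set into two (nearly) equal halves and invoke Proposition \ref{prop:pair-volume}, using the hypothesis $|E(G)|\ge 10$ only to absorb the rounding loss into the constant $5$. The paper bounds each half by $\lfloor m/2\rfloor \ge \frac{9m}{20}$ while you bound the product by $\lfloor m/2\rfloor\lceil m/2\rceil \ge \frac{m^2-1}{4}$, which is an equivalent piece of bookkeeping.
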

\begin{proof}
  Arbitrarily partition $G$ into two edge-disjoint subgraphs $G_1 \cup
  G_2$ with $\lfloor \frac{1}{2} |E(G)| \rfloor \ge \frac{|E(G)|-1}{2} \ge \frac{9|E(G)|}{20}$ edges, and apply 
Proposition
  \ref{prop:pair-volume}.
\end{proof}

\begin{COR} \label{cor:general-volume} Let $G$ be a graph with $n$
  vertices and $m$ edges, where $m \ge 20$.  Then
  $\ISO(G) \ge \frac{m^2}{5 n^2}$.
\end{COR}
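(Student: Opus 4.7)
The plan is to reduce to the bipartite case handled by Corollary \ref{cor:volume} via the standard max-cut observation. Specifically, I would invoke the well-known fact that every graph $G$ contains a bipartite subgraph $H$ with parts $A$ and $B$ using at least half of its edges; this follows immediately by taking a uniformly random 2-coloring of $V(G)$ and applying linearity of expectation, since each edge lies in the cut with probability $1/2$. Let this bipartite subgraph have parts of sizes $a = |A|$ and $b = |B|$ with $a + b = n$, and edge set of size $|E(H)| \ge m/2$.

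Next I would check that the hypothesis of Corollary \ref{cor:volume} is satisfied. Since $m \ge 20$, we get $|E(H)| \ge m/2 \ge 10$, so Corollary \ref{cor:volume} applies to $H$ and yields
\begin{displaymath}
  \ISO(G) \;\ge\; \ISO(H) \;\ge\; \frac{|E(H)|^2}{5\, a\, b} \;\ge\; \frac{(m/2)^2}{5\, a\, b}.
\end{displaymath}
Finally I would use the AM-GM bound $a b \le (a+b)^2/4 = n^2/4$, giving
\begin{displaymath}
  \ISO(G) \;\ge\; \frac{m^2/4}{5 \cdot n^2/4} \;=\; \frac{m^2}{5 n^2},
\end{displaymath}
as required.

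There is essentially no obstacle here: the only subtle point is confirming the $|E(H)| \ge 10$ requirement for Corollary \ref{cor:volume}, which is precisely why the hypothesis $m \ge 20$ appears. The constant $5$ in the conclusion lines up exactly because halving the edge count contributes a factor of $4$ in the numerator, while bounding $ab$ by $n^2/4$ contributes a matching factor of $4$ in the denominator, so the two cancel and leave the constant from Corollary \ref{cor:volume} unchanged.
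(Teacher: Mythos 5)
Your proposal is correct and follows exactly the paper's own argument: a uniformly random bipartition yields a bipartite subgraph with at least $m/2 \ge 10$ edges, Corollary \ref{cor:volume} applies, and the bound $|A||B| \le n^2/4$ gives the stated constant. Nothing is missing.
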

\begin{proof}
  Let $A \cup B$ be a bipartition of the vertex set of $G$ chosen 
  uniformly at random.
  The probability of a single edge intersecting both parts is exactly $\frac{1}{2}$,
  and thus by averaging, there exists a bipartition $A \cup B$ for which
  the bipartite graph $H$ between $A$ and $B$ contains at least $\frac{m}{2}$ edges.
  Since $|A||B| \le \frac{n^2}{4}$ and $m/2\geq 10$, by Corollary \ref{cor:volume}, we have
  $\ISO(G) \ge \frac{(m/2)^2}{5(n^2/4)}=\frac{m^2}{5 n^2}$.
\end{proof}

To prove Proposition \ref{prop:pair-volume}, we considered a random
bijection between the two vertex sets, as there exists a map such that
the resulting number of overlapping edges is at least its
expectation. This strategy turns out to be strong enough when the
graph is dense.  On the other hand, for sparse graphs, Proposition
\ref{prop:starforest} produces a reasonable bound. These were the key
steps used by Erd\H{o}s, Pach, and Pyber in \cite{EPP}. In order to
establish Theorem \ref{thm:randomgraph}, however, we need something
slightly more powerful for the intermediate edge density regime.

The key new ingredient is to design a vertex permutation that performs
better than a uniformly random one. To sketch our argument, consider
the illustrative case $p = n^{-1/2}$, which represents the most
delicate range. We first randomly split the vertices into four parts
$A_1, A_2, B_1, B_2$ of equal size, and let $G_i$ be the bipartite
graph formed by the edges between $A_i$ and $B_i$.  We discard all
other edges, and bound only the similarity between $G_1$ and $G_2$.
Rather than searching for an unstructured permutation of the whole
vertex set, we build a favorable bijection $f: A_1 \cup B_1
\rightarrow A_2 \cup B_2$ which sends $A_1$ to $A_2$ and $B_1$ to
$B_2$ with many overlapping edges.
Note that if we let $f$ be a uniformly random
bijection from $A_1 \cup B_1$ to $A_2 \cup B_2$, then we essentially
recover Proposition \ref{prop:pair-volume}, thus producing a lower
bound of order only $\Theta(n)$, which falls short of Theorem
\ref{thm:randomgraph} by a logarithmic factor.

We start with a uniformly random bijection from $B_1$ to $B_2$, and
carefully extend it from $A_1$ to $A_2$ as follows. Consider a fixed
vertex $v_1$ in $A_1$ and a fixed vertex $v_2$ in $A_2$.  If we mapped
$v_1$ to $v_2$, we would increase the number of overlapping edges by
exactly $|f(N(v_1)) \cap N(v_2)|$, where $N(v_i)$ represents the set
of neighbors of $v_i$ in $B_i$.  (Recall that we discarded all other
edges, so the $v_i$ only have neighbors in their corresponding $B_i$.)
Since we have $p = n^{-1/2}$, if $v_2$ is chosen uniformly at random,
the expected size of the set $f(N(v_1)) \cap N(v_2)$ is some constant
$\lambda$, and this observation led to the $\Theta(n)$ lower bound
when considering a uniformly random bijection.

The crucial observation is that for each individual pair of $v_i$, the
overlap $|f(N(v_1)) \cap N(v_2)|$ asymptotically has the Poisson
distribution with mean $\lambda$. Therefore, with probability at least
$n^{-\varepsilon}$, it will be of size at least $\varepsilon' \frac{\log
  n}{\log \log n}$ for some small constants $\varepsilon$ and
$\varepsilon'$. Since $A_2$ has $\frac{n}{4}$ vertices, the expected
number of vertices $v_2 \in A_2$ that will give this high gain
together with $v_1$ is $\Omega(n^{1-\varepsilon})$. In particular, it is
very likely that there exists a suitable vertex $v_2$ for $v_1$ such
that $|f(N(v_1)) \cap N(v_2)| \geq \varepsilon' \frac{\log n}{\log \log
  n}$, and we will map $v_1$ to $v_2$ in such a situation. By
repeating this for a constant proportion of vertices in $A_1$, we will
obtain $\ISO(\Gnp) \ge \Omega(n \cdot \frac{\log n}{\log \log
  n})$. Since $\gamma = \sqrt{\log n}$, this gives 
$\ISO(\Gnp) \ge \Omega( n \cdot \frac{\log n}{\log \gamma(n)})$ for our
choice of $p$. Our next two lemmas formalize this intuition.

\begin{LEMMA} 
  \label{lem:poisson_random} 
  Let $n$ and $p$ satisfy $n^{-\frac{21}{40}} \le p \le
  \frac{1}{e^6}\sqrt{\frac{\log n}{n}}$, and define $\gamma =
  \frac{1}{p}\sqrt{\frac{\log n}{n}}$.  Let $N_1, \ldots, N_s \subset
  B$ be $s \ge n^{1/3}$ disjoint sets of size $\frac{np}{16}$, and
  consider the random set $B_p$, where we take each element of $B$
  independently with probability $p$.  Then with probability at least
  $1 - e^{-\Omega(n^{1/12})}$, there is an index $i$ such that $|B_p
  \cap N_i| \ge \frac{\log n}{20\log \gamma}$.
\end{LEMMA}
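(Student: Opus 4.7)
The plan is to exploit the disjointness of the sets $N_1,\dots,N_s$. Since they are pairwise disjoint and $B_p$ is obtained by independent $p$-sampling of $B$, the random variables $X_i := |B_p \cap N_i|$ are mutually independent, each distributed as $\mathrm{Bin}(np/16,\, p)$. Let $k := \lceil \log n/(20\log\gamma)\rceil$; the conclusion reduces to showing that at least one $X_i$ attains $k$. By independence it suffices to establish a per-index tail bound of the form $q := \Pr(X_1 \ge k) \ge n^{-\alpha}$ for some fixed $\alpha < 1/3$, after which
\[
\Pr(\text{every } X_i < k) \;\le\; (1-q)^s \;\le\; e^{-sq} \;\le\; e^{-n^{1/3-\alpha}},
\]
which is stronger than $e^{-\Omega(n^{1/12})}$.

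For the per-index bound I would use the elementary combinatorial tail estimate
\[
\Pr(X_1 \ge k) \;\ge\; \binom{np/16}{k} p^k (1-p)^{np/16 - k} \;\ge\; \left(\frac{\lambda}{k}\right)^{\!k} e^{-2\lambda},
\]
where $\lambda := np^2/16 = \log n/(16\gamma^2)$ is the mean of $X_1$, using $\binom{m}{k} \ge (m/k)^k$ and $(1-p)^{m} \ge e^{-2mp}$ for $p \le 1/2$. Since $\gamma \ge e^6$, the correction $2\lambda$ is at most $\log n/(8e^{12})$, which is negligible compared to $\log n$ in the logarithm. The heart of the estimate is $k\log(\lambda/k)$: substituting the definitions of $\lambda$ and $k$ gives
\[
\frac{\lambda}{k} \;=\; \frac{5\log\gamma}{4\gamma^2}, \qquad \log(\lambda/k) \;=\; \log(5/4) + \log\log\gamma - 2\log\gamma \;\ge\; -2\log\gamma,
\]
and multiplying by $k \le \log n/(20\log\gamma)+1$, together with $\log\gamma \le \log n/20$ (which follows from $p \ge n^{-21/40}$), yields $k\log(\lambda/k) \ge -\log n/5$. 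Hence $q \ge n^{-1/5 + o(1)}$, which is certainly of the desired form with $\alpha < 1/3$.

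The main technical point to manage is that this estimate has to hold uniformly across the full range $n^{-21/40} \le p \le (1/e^6)\sqrt{\log n/n}$. At its lower end $\lambda$ is sub-constant and $k$ is essentially a small integer, so $X_1$ is nowhere near its Gaussian regime and standard Chernoff-type concentration is useless in this direction; the argument genuinely requires the Poisson-style combinatorial bound $\binom{m}{k}p^k(1-p)^{m-k}$. The numerical constants $16$ in $|N_i|$ and $20$ in the threshold $k$ are chosen so that the final exponent $\alpha$ stays comfortably below $1/3$ throughout the range, which is precisely what allows the independent union over $s \ge n^{1/3}$ blocks to deliver the stretched-exponential decay $e^{-\Omega(n^{1/12})}$ claimed in the lemma.
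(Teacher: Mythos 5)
Your proposal is correct and follows essentially the same route as the paper: a per-index lower bound on $\pr{|B_p\cap N_i|\ge t}$ via the single-term binomial estimate $\binom{m}{t}p^t(1-p)^{m-t}\ge (m/t)^t p^t e^{-O(mp)}$, giving roughly $n^{-1/5}$ (the paper gets $n^{-1/4}$ with slightly different bookkeeping of the ceiling in $t$), followed by independence of the disjoint blocks to amplify over $s\ge n^{1/3}$ indices. The only cosmetic difference is that you bound $\pr{\text{no success}}\le e^{-sq}$ directly while the paper phrases the last step as a Chernoff bound on the number of successes; both yield $e^{-\Omega(n^{1/12})}$.
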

\begin{proof}
  Let $t = \left\lceil \frac{\log n}{20\log \gamma} \right\rceil$.  In
  our range of $p$, we always have $2 \le t \le \big\lceil \frac{\log
    n}{120} \big\rceil$, so in particular $t \le \frac{\log n}{10\log
    \gamma}$.  For a fixed index $i$, the probability that $|B_p \cap
  N_i| \ge \frac{\log n}{20 \log \gamma}$ is at least
  $\binom{|N_i|}{t} p^t (1-p)^{|N_i|-t}$.  Using the bounds
  $\binom{n}{k} \geq \big( \frac{n}{k} \big)^k$ and $(1-p) \geq
  e^{-\frac{16}{15} p}$ for small $p$, we have
    \begin{align*}
      \binom{|N_i|}{t} p^t (1-p)^{|N_i|-t}
      &\ge \left( \frac{np^2}{16t} \right)^t e^{- np^2/15}
      = \left( \frac{\log n}{16\gamma^2 t} \right)^t e^{- np^2/15} \\
      &\ge \left( \frac{10\log \gamma}{16\gamma^2} \right)^{\log n/(10\log \gamma)} \cdot n^{-1/(15e^{12})} \\
      &= e^{-\frac{\log n}{10 \log \gamma} \cdot \log\left( \frac{16\gamma^2}{10\log \gamma}\right)} \cdot n^{-1/(15e^{12})} \,,
    \end{align*}
    which by $\log\left( \frac{16\gamma^2}{10\log \gamma}\right) \le 2 \log \gamma$ (deduced from $\gamma \ge e^6$), is at least
    \begin{align*}
      & e^{-\frac{\log n}{5}} \cdot n^{-1/(15e^{12})} \ge n^{-1/4} \,.
    \end{align*}
    Hence the expected number of indices $i$ such that $|B_p \cap N_i|
    \ge \frac{\log n}{20 \log \gamma}$ is at least $s \cdot n^{-1/4}
    \ge n^{1/12}$.  Since the sets $N_i$ are disjoint, the above
    events for different choices of $i$ are mutually independent.
    Therefore, by Chernoff's inequality, with probability at least $1
    - e^{-\Omega(n^{1/12})}$, we can find an index $i$ (indeed,
    several) for which $|B_p \cap N_i| \ge \frac{\log n}{20 \log
      \gamma}$.
\end{proof}

The previous estimate enables us to bound the similarity between
random bipartite graphs.

\begin{LEMMA} \label{lem:mapping_random}
  Let $n$ and $p$ satisfy $n^{-\frac{21}{40}} \le p \le
  \frac{1}{e^6}\sqrt{\frac{\log n}{n}}$, and let $\gamma =
  \frac{1}{p}\sqrt{\frac{\log n}{n}}$.  Let $A_1,B_1,A_2,B_2$ be
  disjoint sets of size $\frac{n}{4}$ each, and for each $i = 1,2$,
  let $G_i$ be a random bipartite graph with parts $A_i$ and $B_i$,
  where each edge appears independently with probability $p$.  Then
  $\ISO(G_1,G_2) \ge \frac{n\log n}{160\log \gamma}$ a.a.s.
\end{LEMMA}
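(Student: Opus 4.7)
I would construct $f : A_1 \cup B_1 \to A_2 \cup B_2$ in two stages. Fix any bijection $f_0 : B_1 \to B_2$ and, for each $v_1 \in A_1$, write $S(v_1) := f_0(N_{G_1}(v_1))$. If I can find an injection from some $n/8$-subset of $A_1$ into $A_2$ such that every matched pair $(v_1,v_2)$ satisfies $|N_{G_2}(v_2) \cap S(v_1)| \ge t$, where $t := \lceil \log n / (20 \log \gamma) \rceil$, then extending this injection arbitrarily to a bijection $A_1 \to A_2$ and combining with $f_0$ yields $|f(G_1) \cap G_2| \ge (n/8)\,t \ge n\log n / (160 \log \gamma)$. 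So the task reduces to exhibiting a matching of size $n/8$ in the bipartite graph $H$ on $A_1 \cup A_2$ defined by $(v_1,v_2) \in H \iff |N_{G_2}(v_2) \cap S(v_1)| \ge t$.

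I first expose $G_1$ only. Theorem~\ref{thm:chernoffinequality} applied to individual degrees and to the binomial variables $|N_{G_1}(v_1) \cap N_{G_1}(v_1')|$ gives, a.a.s., $|N_{G_1}(v_1)| \ge np/8$ for every $v_1 \in A_1$ and $|N_{G_1}(v_1) \cap N_{G_1}(v_1')| \le K \log n$ for every pair, with $K$ an absolute constant. Since $f_0$ is a bijection, both estimates transfer verbatim to the sets $S(v_1)$. A greedy argument then extracts, out of any $T \subseteq A_1$ with $|T| \ge n^{1/3}$, a family of $n^{1/3}$ pairwise disjoint subsets of size $np/16$, each contained in some $S(v_1)$ with $v_1 \in T$: after fewer than $n^{1/3}$ successful extractions the pairwise-intersection bound forces $|S(v_1) \cap R| \le n^{1/3} K \log n < np/16$ for every remaining $v_1 \in T$ (using $np \ge n^{19/40} \gg n^{1/3} K \log n$), so the next extraction always succeeds.

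Now expose $G_2$. Applying Lemma~\ref{lem:poisson_random} with these disjoint subsets and $B_p := N_{G_2}(v_2)$ yields, for each $v_2 \in A_2$ and each $T \subseteq A_1$ with $|T| > n/8$,
\[
  \pr{v_2 \notin N_H(T)} = \pr{\forall v_1 \in T:\ |N_{G_2}(v_2) \cap S(v_1)| < t} \le e^{-\Omega(n^{1/12})}.
\]
By the defect form of Hall's theorem, $H$ has a matching of size $n/8$ provided $|N_H(T)| \ge |T| - n/8$ for every $T \subseteq A_1$; this is vacuous for $|T| \le n/8$ and reduces to $|A_2 \setminus N_H(T)| \le n/8$ otherwise. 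Crucially, the neighborhoods $N_{G_2}(v_2)$ are independent across $v_2 \in A_2$, so the indicators $\mathbf{1}\{v_2 \notin N_H(T)\}$ are independent Bernoullis with parameter at most $e^{-\Omega(n^{1/12})}$, and a standard binomial tail bound gives
\[
  \pr{|A_2 \setminus N_H(T)| > n/8} \le \binom{n/4}{n/8} \bigl(e^{-\Omega(n^{1/12})}\bigr)^{n/8} \le e^{-\Omega(n^{13/12})},
\]
which comfortably beats the $2^{n/4}$ choices of $T$. A union bound then certifies Hall's condition globally and the desired matching exists a.a.s.

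The main obstacle I anticipate is that the single-pair edge probability $\pr{(v_1,v_2) \in H}$ is only of order $n^{-1/4}$, so typical $H$-degrees are merely $n^{3/4}$---far below the target matching size $n/8$. A naive greedy that processes $v_1$'s one at a time and hunts for an unused good $v_2$ is therefore doomed. The remedy is to feed Lemma~\ref{lem:poisson_random} a whole batch of disjoint subsets drawn from $\{S(v_1) : v_1 \in T\}$, making the probability that a fixed $v_2$ is bad for \emph{every} $v_1 \in T$ exponentially small in $n^{1/12}$. Combined with the column-independence of $G_2$, this is precisely the concentration needed to survive a union bound over the exponentially many subsets $T$ and thereby force Hall's condition.
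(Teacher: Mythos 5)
Your argument is correct, and it reaches the stated bound $\frac{n}{8}\cdot\frac{\log n}{20\log\gamma}=\frac{n\log n}{160\log\gamma}$ by a genuinely different route from the paper. Both proofs hinge on feeding Lemma \ref{lem:poisson_random} a batch of $n^{1/3}$ disjoint sets of size $np/16$, but you swap the roles of the two sides. The paper exposes $G_2$ first, extracts a $(\frac{np}{16},n^{1/3})$-star-forest inside the still-unmatched part of $A_2$ by a maximality argument (using that all $G_2$-degrees lie in $[\frac{np}{8},np]$), and then reveals $G_1$ one vertex of $A_1$ at a time, using the fresh randomness of $N_{G_1}(v_1)$ as the set $B_p$ to greedily embed $v_1$; the bijection $B_1\to B_2$ is taken uniformly at random there, though (as your choice of an arbitrary $f_0$ shows) its randomness is not really needed. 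You instead expose $G_1$ first, manufacture the disjoint family inside the sets $S(v_1)$ via a codegree bound, and then let the neighborhoods $N_{G_2}(v_2)$ play the role of $B_p$; the independence of these neighborhoods across $v_2\in A_2$ gives a per-$T$ failure probability $e^{-\Omega(n^{13/12})}$ that beats the $2^{n/4}$ union bound, so the defect Hall condition for your auxiliary graph $H$ holds and the matching is found in one shot rather than greedily. This buys a cleaner, non-sequential argument with no exposure bookkeeping; the paper's greedy formulation is the one that transfers to Section \ref{sec:general}, where there is no independent randomness left to exploit. Two small points to tidy up: Theorem \ref{thm:chernoffinequality} as stated requires $\lambda\le np$ and therefore does not literally give the codegree bound $|N_{G_1}(v_1)\cap N_{G_1}(v_1')|\le K\log n$ (the deviation there vastly exceeds the mean $np^2/4$); use instead the elementary estimate $\pr{\bin(N,q)\ge k}\le\binom{N}{k}q^k\le(eNq/k)^k$, which with $Nq\le\frac{\log n}{4e^{12}}$ and $k=K\log n$ is $O(n^{-10})$ even for $K=1$, surviving the union bound over pairs. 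And in the greedy extraction you should take the $n^{1/3}$ disjoint subsets from \emph{distinct} vertices $v_1\in T$ (removing each used $v_1$), so that the residual intersection $|S(v_1)\cap R|\le n^{1/3}K\log n<np/16$ is indeed controlled by pairwise codegrees; since $|T|>n/8\gg n^{1/3}$ this costs nothing.
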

\begin{proof}
  Start with a uniformly random bijection $f$ from $B_1$ to $B_2$, and also
  expose all edges in the random bipartite graph $G_2$. Since $p \ge
  n^{-\frac{21}{40}}$, Chernoff's inequality and a union bound establish
  that a.a.s., all degrees in $G_2$ are between $\frac{np}{8}$ and $np$.
  Condition on this event.  We expose the edges in the bipartite graph
  $G_1$ by iterating over the vertices in $A_1$, exposing each vertex's
  incident edges in turn.  Consider the following greedy algorithm for
  finding a bijection between $A_1$ and $A_2$.  Let $A_1'$ be the set of
  vertices in $A_1$ whose edges have been exposed, and suppose that we have
  an injective map $f: A_1' \rightarrow A_2$ such that for all $x \in
  A_1'$, $f(N(x))$ and $N(f(x))$ intersect in at least $\frac{\log
  n}{20\log \gamma}$ vertices. Let $A_2' = f(A_1')$, and let $A_i'' = A_i
  \setminus A_i'$ for $i=1,2$.  Suppose that $|A_1''| = |A_2''| \ge
  \frac{|A_1|}{2}$ at some point of the process.

  We first prove that the graph $A_2'' \cup B_2$ contains a
  $(\frac{np}{16}, n^{1/3})$-star-forest. Indeed, let $k$ be the largest
  integer such that there exists a $(\frac{np}{16}, k)$-star-forest
  $\{(x,N_x) : x \in X\}$ for some set $X \subset A_2''$ of size $|X| = k$,
  and suppose that $k < n^{1/3}$. Let $N(X)$ be the union of all
  neighborhoods of vertices in $X$.  We know that for every vertex $w \in
  A_2'' \setminus X$, we have $|N(w) \cap N(X)| \ge \left(\frac{1}{8} -
  \frac{1}{16}\right)np \ge \frac{np}{16}$ as otherwise we find a
  $(\frac{np}{16}, k+1)$-star-forest, contradicting maximality. Therefore,
  there are at least $\frac{np}{16} \cdot (|A_2''| - |X|) \ge
  \frac{n^2p}{128}$ edges between the sets $A_2'' \setminus X$ and $N(X)$,
  and in particular, the set $N(X)$ has at least $\frac{n^2p}{128}$
  incident edges in $G_2$.  Note that $|N(X)| \le knp \le n^{4/3}p$, 
  since we conditioned on all degrees in $G_2$ being at most $np$, and by
  the same reason, the
  number of edges incident to $N(X)$ must be at most $n^{7/3} p^2 <
  \frac{n^2p}{128}$, contradiction.  Therefore, we have $k \ge n^{1/3}$, as
  claimed.

  Now take any vertex $v_1 \in A_1''$, and expose its edges to $B_1$.
  Its neighborhood $N(v_1)$ is a random subset of $B_1$, where each
  vertex of $B_1$ appears independently with probability $p$.  Since
  the bijection $f : B_1 \rightarrow B_2$ was fixed from the outset,
  the image of the neighborhood $f(N(v_1))$ is also a random subset of
  $B_2$ with the same product distribution.  By Lemma
  \ref{lem:poisson_random}, with probability at least $1 -
  e^{-\Omega(n^{1/12})}$, we can find a vertex $v_2 \in X \subset
  A_2''$ such that $|f(N(v_1)) \cap N_{v_2}| \ge \frac{\log n}{20 \log
    \gamma}$, where $X$ and $N_{v_2}$ were from the star forest
  constructed above.  Define $f(v_1) = v_2$ and repeat the procedure.
  Since the probability of success at each round is $1-o(n^{-1})$, we
  can successfully iterate $\frac{|A_1|}{2}$ times a.a.s., and then
  finish by extending $f$ by an arbitrary bijection between the
  non-mapped vertices of $A_1$ and $A_2$.  In this way, we obtain a
  bijection $f$ such that the number of edges in $f(G_1) \cap G_2$ is
  at least $\frac{|A_1|}{2} \cdot \frac{\log n}{20\log \gamma} =
  \frac{n \log n}{160\log \gamma}$, as desired.
\end{proof}

We are now ready to prove the lower bounds of Theorem \ref{thm:randomgraph}.

\begin{proof}[Proof of lower bound in Theorem \ref{thm:randomgraph}.]
  Part (i) has two subcases. First, for $\frac{1}{2n} \le p \le
  n^{-21/40}$, note that $\gamma = \frac{1}{p} \sqrt{\frac{\log n}{n}}
  \geq n^{1/40} \sqrt{\log n}$, so the desired lower bound is of order
  $n \cdot \frac{\log n}{\log \gamma} = \Theta(n)$.  In this range,
  the number of non-isolated vertices is $\Theta(n)$ a.a.s., so
  Proposition \ref{prop:starforest} completes this case.  For the next
  range $n^{-\frac{21}{40}} \le p \le \frac{1}{e^6}\sqrt{\frac{\log
      n}{n}}$, we apply Lemma \ref{lem:mapping_random} after splitting
  the vertex set into four parts.  Part (ii) follows directly from
  Corollary \ref{cor:volume}.
\end{proof}

\section{Self-similarity of general graphs}
\label{sec:general}

Although general graphs are not intrinsically random, we apply
probabilistic techniques to find large edge-disjoint isomorphic
subgraphs.  
The outline of our proof for general graphs is similar to that for
random graphs (see the discussion following Corollary \ref{cor:general-volume}
in the previous section).  The key idea is to exploit tail events in
the Poisson distribution.  However, establishing this was somewhat
easier for random graphs since we had independence, and could expose
edges in a controlled manner. For general graphs, there are no random
edges to expose.  Instead, we turn to star forests, which were also an
important component in the proof of Lemma \ref{lem:mapping_random}.

Let $G$ be a given graph on $n$ vertices with average degree $d$. As
before, we begin by randomly splitting the vertices into four parts
$A_1, A_2, B_1, B_2$, and consider the bipartite graphs $G_i$ formed
by the edges between $A_i$ and $B_i$.  We attempt to find a total of
$\Omega(n^{1-\alpha})$ many $(\frac{d}{8},n^\alpha)$-star-forests
$S_{i,j} = \{(v,N_v) : v \in X_{i,j}\}$ for $i=1,2$, $1 \le j \le
\Omega(n^{1-\alpha})$, where the sets $X_{i,j} \subset A_i$ are
disjoint for different indices. Note that $\bigcup X_{i,j}$ then cover
a constant fraction of each $A_i$, and hence the edges in these star
forests constitute a constant fraction of the edges in the entire
graph $G$.  If we fail to find such star forests, then we will be able
to pass to a subgraph where we can find even larger isomorphic
subgraphs. On the other hand, once we find such star forests, we take
a random bijection $f_B$ from $B_1$ to $B_2$, and extend it by
independent bijections from $X_{1,j}$ to $X_{2,j}$.  To this end, we
declare $f_B$ to be \emph{good}\/ for the index $j$ if it can be
extended to a bijection between the sets $B_1 \cup X_{1,j}$ and $B_2
\cup X_{2,j}$ so that the two star-forests overlap in
$\Omega\left(|X_{1,j}| \cdot \frac{\log n}{\log \left(\frac{n \log
        n}{d^2} \right)}\right)$ edges under the map. If some
bijection $f_B$ happens to be good for a constant proportion of
indices $j$, then we can extend the bijection $f_B$ to the sets
$X_{1,j}$ for these indices, and thereby construct a map $f$ that
overlaps many edges of $G_1$ and $G_2$.

To begin this program, our first lemma establishes the tail
probability of the main random variable in our setting.  It is the
analogue of Lemma \ref{lem:poisson_random}.

\begin{LEMMA} 
  \label{lem:poisson_general} 
  Let $\alpha < \frac{1}{2}$ be a fixed positive real number, and let
  $d$ and $n$ satisfy $n^{\frac{1}{2} - \frac{\alpha}{16}} \le d \le
  \sqrt{\alpha n \log n}$. Let $N_1, \ldots, N_s \subset [n]$ be fixed
  disjoint sets of size $\frac{d}{2}$ for some $s \ge \frac{1}{5} n^{\alpha}$, and
  let $N$ be a uniformly random subset of $[n]$ with exactly $d$
  elements.  Then with probability at least $1 -
  e^{-\Omega(n^{\alpha/4})}$, there exists an index $i$ such that $|N
  \cap N_i| \ge \frac{\alpha \log n}{8 \log \left(\frac{n \log n}{d^2}
    \right)}$.
\end{LEMMA}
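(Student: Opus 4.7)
The plan is to parallel Lemma \ref{lem:poisson_random}, with the key new twist that $N$ is a uniformly random $d$-subset rather than a Bernoulli subset of $[n]$. Write $\gamma' = n \log n / d^2$ for brevity; the hypotheses translate into $\gamma' \ge 1/\alpha$ and $\log \gamma' \le (\alpha/8) \log n + \log \log n$. Set $t = \lceil \frac{\alpha \log n}{8 \log \gamma'} \rceil$. The proof proceeds in two parts: lower-bound the single-index success probability, and then combine the $s$ estimates across indices.

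For the single-index bound, $|N \cap N_i|$ follows the hypergeometric distribution, so
\[
\pr{|N \cap N_i| \ge t} \ge \binom{d/2}{t}\binom{n-d/2}{d-t} / \binom{n}{d}.
\]
I would apply $\binom{d/2}{t} \ge (d/(2t))^t$ to the first binomial and expand the second ratio via the identity
\[
\frac{\binom{n-d/2}{d-t}}{\binom{n}{d}} = \frac{d(d-1)\cdots(d-t+1)}{n(n-1)\cdots(n-t+1)} \cdot \prod_{j=0}^{d/2-t-1} \frac{n-d-j}{n-t-j},
\]
bounding the first factor by $(d/(2n))^t$ and the telescoping product by $e^{-(1+o(1))d^2/(2n)}$ via $\log(1-x) \ge -x - x^2$. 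Substituting $d^2/n = \log n/\gamma'$ and applying the bounds on $\gamma'$ above, a calculation parallel to that in Lemma \ref{lem:poisson_random} yields $\pr{|N \cap N_i| \ge t} \ge n^{-3\alpha/4}$.

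The main obstacle is the dependence of the events $\{|N \cap N_i| \ge t\}$ across $i$, arising because $|N|$ is fixed (whereas in Lemma \ref{lem:poisson_random}, independence came for free from Bernoulli sampling). To bypass this, I would couple $N$ with a Bernoulli subset $\tilde{N}$ of $[n]$ in which each element is included independently with probability $d/n$; the conditional distribution of $\tilde{N}$ given $|\tilde{N}| = d$ is exactly that of $N$, and a routine Stirling computation shows $\pr{|\tilde{N}| = d} \ge (n \log n)^{-1/2}$. For $\tilde{N}$, disjointness of the $N_i$ renders the $s$ success events mutually independent, so Chernoff applied to the indicator sum $\sum_i \mathbf{1}[|\tilde{N} \cap N_i| \ge t]$ — whose expectation is at least $s \cdot n^{-3\alpha/4} \ge n^{\alpha/4}/5$ — bounds the Bernoulli failure probability by $e^{-\Omega(n^{\alpha/4})}$. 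Conditioning on $|\tilde{N}| = d$ costs only a multiplicative factor of $\sqrt{n \log n} = e^{O(\log n)}$, which is absorbed into the target bound.
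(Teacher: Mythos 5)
Your proposal is correct, and its core is the same as the paper's: lower-bound the point probability that a single intersection reaches $t = \lceil \frac{\alpha \log n}{8\log\gamma'}\rceil$ by the explicit product formula, then exploit the mutual independence across the disjoint $N_i$ in a product (Bernoulli) model and finish with Chernoff. The one place you genuinely diverge is the transfer between the exact-$d$-subset model and the Bernoulli model. The paper couples $N$ from below: it takes $N'$ Bernoulli with the \emph{halved} density $\frac{d}{2n}$, notes that $N'\subseteq N$ can be arranged except on the event $|N'|>d$, which has probability $e^{-\Omega(d)}$, and then works entirely with $N'$ (so only monotonicity of the target event is needed). You instead condition the Bernoulli set at density $\frac{d}{n}$ on having exactly $d$ elements, which recovers the uniform model exactly, and pay the factor $\pr{|\tilde N|=d}^{-1}=O(\sqrt{d})=e^{O(\log n)}$, which is indeed absorbed by $e^{-\Omega(n^{\alpha/4})}$. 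Both devices are standard and valid; yours is exact but needs the local estimate on $\pr{|\tilde N|=d}$, the paper's is one-sided but needs only a Chernoff bound and costs a factor of $2$ in the density (harmless here). One small presentational point: you compute the single-index probability $n^{-3\alpha/4}$ for the hypergeometric $|N\cap N_i|$, but the Chernoff step is applied to the indicators for $\tilde N$, so what you actually need is the same lower bound for $\pr{|\tilde N\cap N_i|\ge t}$ with $|\tilde N\cap N_i|\sim\bin(d/2,d/n)$. This follows from the same (in fact slightly easier) computation — the leading factor becomes $\bigl(\tfrac{d^2}{2nt}\bigr)^t$ rather than $\bigl(\tfrac{d^2}{4nt}\bigr)^t$ — but you should state it for the binomial explicitly; the hypergeometric computation is then not needed at all.
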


\begin{proof}
  Let $N'$ be a random subset of $[n]$ obtained by independently
  taking each element with probability $\frac{d}{2n}$. The distribution
  of $N'$ conditioned on the event $|N'|\le d$ can be coupled with the
  random variable $N$, so that $N' \subset N$ (given $N'$, let $N$ be
  a set of size $d$ containing $N'$ chosen uniformly at random). 
  By Chernoff's bound,
  the probability of $|N'|>d$ is at most $e^{-\Omega(d)} <
  e^{-\Omega(n^{\alpha/4})}$, since $d \geq n^{\frac{1}{2} -
    \frac{\alpha}{16}}$ and $\alpha < \frac{1}{2}$.
  Therefore, in order to prove our lemma, 
  it suffices to show that with probability at least $1 -
  e^{-\Omega(n^{\alpha/4})}$, there exists an index $i$ such that $|N'
  \cap N_i| \geq \frac{\alpha \log n}{8 \log \left(\frac{n \log n}{d^2}
    \right)}$.  Define
  \begin{displaymath}
    \gamma = \frac{n \log n}{d^2} 
    \quad \text{and} \quad
    t = \left\lceil
      \frac{\alpha \log n}{8 \log \gamma} 
    \right\rceil \,.
  \end{displaymath}
  Since $n^{\frac{1}{2} - \frac{\alpha}{16}} \le d \le \sqrt{\alpha n
    \log n}$, we have
  \begin{align}
    \nonumber
    2 < \frac{1}{\alpha} &\leq \gamma \leq
    n^{\frac{\alpha}{8}} \log n,
  \end{align}
  from which it follows that
  \begin{align}
    t \ge \frac{\alpha \log n}{8 \log \gamma} &\ge
    \frac{\alpha \log n}{8 \log (n^{\frac{\alpha}{8}} \log n)} =
    \frac{\alpha \log n}{\alpha \log n + 8 \log \log n} \ge
    \frac{1}{2}
    \,,
  \end{align}
  for sufficiently large $n$. 
  Therefore, the
  rounding effect in the definition of $t$ at most doubles the value,
  and we have $1 \leq t \leq \frac{\alpha \log n}{4 \log \gamma}$.
  
  For each index $i$, let $E_i$ be the event that $|N' \cap N_i| \geq t$.
  As $|N' \cap N_i|$ is binomially distributed, just as in the proof of
  Lemma \ref{lem:poisson_random}, we may use the bounds $\binom{n}{k} \geq
  \big( \frac{n}{k} \big)^k$ and $1-p > e^{-2p}$ (for small $p$) to find
  \begin{align*}
    \pr{E_i} \geq \binom{|N_i|}{t} \left( \frac{d}{2n} \right)^t
    \left( 1 - \frac{d}{2n} \right)^{|N_i| - t}
    \geq \left( \frac{d/2}{t} \right)^t \left( \frac{d}{2n} \right)^t
    \left( e^{-\frac{d}{n}} \right)^{\frac{d}{2}}
    = \left( \frac{d^2}{4nt} \right)^t e^{-\frac{d^2}{2n}}.
  \end{align*}
  Substitute $t \le \frac{\alpha \log n}{4\log \gamma}$ to get
  \begin{align*}
    \pr{E_i} &\ge 
    \left( \frac{d^2}{4n} \cdot \frac{4 \log \gamma}{\alpha \log n} \right)^t 
    e^{-\frac{d^2}{2n}} 
    = \left( \frac{\log \gamma}{\alpha \gamma} \right)^t e^{-\frac{\log n}{2 \gamma}} \,.
  \end{align*}
  Since $\alpha < \frac{1}{2}$, $\log \gamma > \log 2$, and $t \le \frac{\alpha \log n}{4\log \gamma}$, 
  this is at least
  \begin{displaymath}
    \left( \frac{1}{\gamma} \right)^{\frac{\alpha \log n}{4 \log \gamma}}
    e^{-\frac{\log n}{2\gamma}}
    =
    n^{-\frac{\alpha}{4}} n^{-\frac{1}{2\gamma}}
    \geq
    n^{-\frac{\alpha}{4}} n^{-\frac{\alpha}{2}} = n^{-\frac{3\alpha}{4}} \,.
  \end{displaymath}
  The $E_i$ are independent because the $N_i$ are disjoint.
  Therefore the number of $E_i$ that occur stochastically
  dominates a binomial random variable with mean $sn^{-3\alpha/4} \ge \frac{1}{5} n^{\alpha/4}$, and
  we conclude by the Chernoff bound that at least one $E_i$ (indeed,
  several) occurs with probability $1 - e^{-\Omega(n^{\alpha/4})}$, as
  desired.
\end{proof}

In the previous section, in Lemma \ref{lem:mapping_random}, 
we exploited the fact that the given graph
was random and the edges were independent. This trick is too
restrictive to be applied to general graphs. However, the next lemma
says that for star-forests, one can obtain a lemma similar to Lemma
\ref{lem:mapping_random}.

\begin{LEMMA} 
  \label{lem:general_mapping} 
  Let $\alpha < \frac{1}{2}$ be a fixed positive real number, and
  suppose that $n$ and $d$ satisfy $n^{\frac{1}{2} -
    \frac{\alpha}{16}} \le d \le \sqrt{\alpha n \log n}$, and are
  sufficiently large.  For $i=1,2$, let $G_i$ be a $(d,
  n^{\alpha})$-star-forest $\{(v,N_v): v \in X_i\}$ in the
  vertex set $X_i \cup B_i$, where $|X_i| = n^{\alpha}$ and $|B_i| =
  n$. The bijection $f_B$ from $B_1$ to $B_2$ chosen uniformly at
  random satisfies the following property with probability at least $1
  - e^{-\Omega(n^{\alpha/4})}$: $f_B$ can be extended to $X_1\cup B_1$
  so that the graph $f_B(G_1) \cap G_2$ has at least $|X_1| \cdot
  \frac{\alpha \log n}{36 \log \left( \frac{n \log n}{d^2} \right)}$
  edges.
\end{LEMMA}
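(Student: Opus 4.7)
The plan is to adapt the greedy sequential construction from the proof of Lemma \ref{lem:mapping_random}, with Lemma \ref{lem:poisson_general} now playing the role that Lemma \ref{lem:poisson_random} played there. Write $\gamma = \frac{n \log n}{d^2}$, and set $t = \bigl\lceil \frac{\alpha \log n}{8 \log \gamma} \bigr\rceil$ and $K = \lfloor n^\alpha / 4 \rfloor$. First I would fix an arbitrary ordering $v_1^{(1)}, \ldots, v_1^{(n^\alpha)}$ of $X_1$ and expose the random bijection $f_B$ one piece at a time, revealing its restriction to $N_{v_1^{(k)}}$ at the $k$-th stage. At stage $k \leq K$ the goal is to select a previously unused $v_2^{(k)} \in X_2$ satisfying $|f_B(N_{v_1^{(k)}}) \cap N_{v_2^{(k)}}| \geq t$ and set $f_B(v_1^{(k)}) := v_2^{(k)}$; once the $K$ stages are complete, I would extend $f_B$ arbitrarily to a bijection between the leftover vertices of $X_1$ and $X_2$. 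The matched stages then contribute an overlap of at least $K \cdot t$, which for large $n$ exceeds $|X_1| \cdot \frac{\alpha \log n}{36 \log \gamma}$ (a denominator of $\frac{1}{32}$ absorbs the $O(1)$ losses coming from the floors and ceilings).

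To apply Lemma \ref{lem:poisson_general} at stage $k$, I would let $A_k \subseteq X_2$ be the set of vertices of $X_2$ not yet used, and $W_k = B_2 \setminus \bigcup_{j<k} f_B(N_{v_1^{(j)}})$. Because the $N_{v_1^{(j)}}$ are disjoint and $f_B$ is a bijection, $|B_2 \setminus W_k| = (k-1)d$; and because the neighborhoods $\{N_{v_2} : v_2 \in X_2\}$ are themselves disjoint, a double counting argument shows that at most $2(k-1)$ vertices $v_2 \in A_k$ satisfy $|N_{v_2} \setminus W_k| > d/2$. Restricting to the set $A_k' \subseteq A_k$ of those $v_2$ with $|N_{v_2} \cap W_k| \geq d/2$, I get $|A_k'| \geq n^\alpha - 3(k-1) \geq n^\alpha/4 \geq \frac{1}{5} n^\alpha$ throughout $k \leq K$, and for each $v_2 \in A_k'$ I pick $\tilde{N}_{v_2} \subseteq N_{v_2} \cap W_k$ of size exactly $d/2$; these are pairwise disjoint subsets of $W_k$. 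The key observation is now that, conditional on the history through stage $k-1$, the random set $f_B(N_{v_1^{(k)}})$ is uniformly distributed over the $d$-subsets of $W_k$, and $|W_k| = n - O(n^\alpha d) = (1-o(1))n$ because $\alpha < \frac{1}{2}$. Applying Lemma \ref{lem:poisson_general} with ambient size $|W_k|$ in place of $n$ (which perturbs all of its parameters by only $1+o(1)$ factors) to the disjoint family $\{\tilde{N}_{v_2} : v_2 \in A_k'\}$ then yields conditional success probability at least $1 - e^{-\Omega(n^{\alpha/4})}$ at stage $k$. A union bound over the $K \leq n^\alpha$ stages completes the construction with overall probability $1 - e^{-\Omega(n^{\alpha/4})}$.

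The main obstacle I expect is the mismatch between Lemma \ref{lem:poisson_general}, which is stated for uniform $d$-subsets of a ground set of size $n$, and the distribution I actually have at stage $k$, which is uniform only over $d$-subsets of the slightly smaller set $W_k$. Since $|W_k|/n = 1 - o(1)$ throughout (this is where the hypothesis $\alpha < \frac{1}{2}$ is used, through $n^\alpha d = o(n)$), the lemma goes through with negligible change in its constants; what does require real care is the combinatorial bookkeeping that keeps $|A_k'| \geq \frac{1}{5} n^\alpha$ valid at every stage, and this is what forces the cap $K = \lfloor n^\alpha/4 \rfloor$ rather than running the iteration for the full $n^\alpha$ steps.
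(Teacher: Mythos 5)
Your proposal is correct and follows essentially the same route as the paper's proof: the same greedy stage-by-stage exposure of $f_B$ on the neighborhoods $N_{v_1^{(k)}}$, the same count showing at most $2(k-1)$ vertices of $X_2$ have lost more than $d/2$ of their neighborhood (so at least $n^\alpha/4 \ge \frac{1}{5}n^\alpha$ usable disjoint half-neighborhoods remain), the same application of Lemma \ref{lem:poisson_general} to a uniform $d$-subset of the $(1-o(1))n$-element unexposed part of $B_2$, and the same union bound over the $\lfloor n^\alpha/4\rfloor$ stages. The constant bookkeeping (your $32$ vs.\ $36$ slack absorbing the $1+o(1)$ perturbations) matches the paper's $8\to 9\to 36$ accounting.
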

\begin{proof}
  Consider a uniformly random bijection $f_B$ from $B_1$ to $B_2$.  As
  in the proof of Lemma \ref{lem:mapping_random}, we will pick
  vertices of $X_1$ one at a time, mapping each one to some vertex in
  $X_2$ in such a way that their neighbors intersect in at least
  $\frac{\alpha \log n}{9 \log \left( \frac{n \log n}{d^2} \right)}$
  vertices under the map $f_B$.  By repeating this for $|X_1|/4$
  steps, we then extend $f_B$ to form a total of at least
  $\frac{|X_1|}{4} \cdot \frac{\alpha \log n}{9 \log \left(
      \frac{n \log n}{d^2} \right)}$ overlapping edges, as required.

  To this end, suppose that we have already embedded some set $X_1'
  \subset X_1$ of size less than $|X_1|/4$, and let $X_2'$ be the
  image of $X_1'$.  Further suppose that we have only exposed the
  outcome of $f_B$ on the neighbors of $X_1'$. Let $B_1' = \bigcup_{x
    \in X_1'} N_x$ and $B_2'$ be its image (which is already fully
  determined by our partial exposure).  The unexposed remainder of
  $f_B$, conditioned on the previous outcome, is a random uniform
  bijection from $B_1 \setminus B_1'$ to $B_2 \setminus B_2'$.  Choose
  an arbitrary vertex $x_1 \in X_1 \setminus X_1'$.  Call a vertex
  $x_2 \in X_2 \setminus X_2'$ \emph{available}\/ if $|N_{x_2}
  \setminus B_2'| \ge \frac{d}{2}$, or equivalently, $|N_{x_2} \cap
  B_2'| \le \frac{d}{2}$.  Since each unavailable vertex accounts for
  at least $\frac{d}{2}$ vertices of $|B_2'|$, and those sets are
  disjoint for different unavailable vertices (because $G_2$ is a star
  forest), we conclude that the number of unavailable vertices is at
  most
  \begin{displaymath}
    \frac{|B_2'|}{d/2}
    =
    \frac{d |X_2'|}{d/2}
    =
    2 |X_2'|
    \leq
    \frac{|X_1|}{2} \,,
  \end{displaymath}
  and hence the number of available vertices in $X_2 \setminus X_2'$
  is at least $|X_1|/4$.
  
  We now expose the images of the $d$ neighbors of $x_1$.  This is a
  uniformly random $d$-element subset of $B_2 \setminus B_2'$, where
  \begin{displaymath}
    (1-o(1))n = n - d |X_1'| \leq |B_2 \setminus B_2'| \leq n \,.
  \end{displaymath}
  For each available vertex $x_2$, its (deterministically known)
  neighborhood in $B_2 \setminus B_2'$ has size at least $d/2$, and
  there are at least $|X_1|/4 = n^\alpha / 4$ such neighborhoods,
  all disjoint, coming from different available vertices.  
  We are therefore in the setting of Lemma
  \ref{lem:poisson_general} (with $(1-o(1))n$ instead of $n$), and so we conclude that with probability
  $1 - e^{-\Omega(n^{\alpha/4})}$, there is an available vertex $x_2$
  such that $|f_B(N_{x_1}) \cap N_{x_2}| \ge \frac{\alpha \log n}{9
    \log \left(\frac{n \log n}{d^2} \right)}$. Furthermore, we only
  need to expose the outcome of $f_B$ on $N_{x_1}$. We can continue
  the process for at least $\frac{|X_1|}{4}$ times, with probability
  at least $1 - \frac{|X_1|}{4} \cdot e^{-\Omega(n^{\alpha/4})} = 1 -
  e^{-\Omega(n^{\alpha/4})}$. This proves the lemma.
\end{proof}

Our next proposition bounds the self-similarity of a graph in terms of
its median degree.  To prove the proposition, we will find many
star-forests in our graph, and apply Lemma \ref{lem:general_mapping}
several times.

\begin{PROP}
  \label{prop:key} 
  Let $\alpha \leq \frac{1}{25}$ be a fixed positive real number.
  Then for every sufficiently large $n$ and $d$ satisfying $6
  n^{\frac{1}{2} - \frac{\alpha}{16}} \le d \le \sqrt{\alpha n \log
    n}$, every $n$-vertex graph $G$ with at least $\frac{n}{2}$
  vertices of degree at least $d$ has $\ISO(G) > \frac{\alpha n \log
    n}{2592 \log \left( \frac{n \log n}{d^2} \right)}$.
\end{PROP}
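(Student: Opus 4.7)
The plan is to adapt the random-graph strategy of Section \ref{sec:random} to arbitrary graphs, replacing the probabilistic neighborhood control from Lemma \ref{lem:mapping_random} by a structural star-forest extraction, and then reusing Lemma \ref{lem:general_mapping} simultaneously on many pairs of star-forests anchored by a single random bijection between the two halves. This is exactly the outline sketched before Lemma \ref{lem:poisson_general}, which I would formalize in three steps.

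First, partition $V(G)$ uniformly at random into four parts $A_1, B_1, A_2, B_2$ of size $n/4$ and, for $i=1,2$, let $G_i$ be the bipartite graph of those edges of $G$ that run between $A_i$ and $B_i$; the $G_i$ are automatically edge-disjoint. For each of the $\ge n/2$ vertices $v$ of degree $\ge d$, Theorem \ref{thm:chernoffinequality} shows that, conditional on $v\in A_i$, at least $d/8$ of its neighbors land in $B_i$ with probability $1-e^{-\Omega(d)}$; since $d\ge 6n^{1/2-\alpha/16}$ a union bound makes this hold for every high-degree vertex simultaneously a.a.s., and a.a.s.\ at least $n/16$ high-degree vertices land in each $A_i$. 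Conditioning on this event, iteratively extract disjoint-center $(d/8, n^{\alpha})$-star-forests $S_{i,j}=\{(v,N_v):v\in X_{i,j}\}$ in each $G_i$ and pair them as $(S_{1,j}, S_{2,j})$ for $j=1,\dots,\Theta(n^{1-\alpha})$. The existence of each successive forest on the still-available high-degree vertices follows from a maximality dichotomy in the style of the ``$(\tfrac{np}{16}, n^{1/3})$-star-forest'' argument inside the proof of Lemma \ref{lem:mapping_random}: either the forest exists, or the remaining high-degree vertices pack their neighborhoods into so small a subset of $B_i$ that the resulting subgraph is dense enough for Corollary \ref{cor:general-volume} to already give a bound on $\ISO(G)$ stronger than the one the proposition claims.

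Now sample $f_B:B_1\to B_2$ uniformly at random. Applied to the star-forests $S_{1,j}$ and $S_{2,j}$ with stars of degree $d/8$ (the hypothesis on $d$ in Lemma \ref{lem:general_mapping} is satisfied up to the factor of $6$ built into the proposition), Lemma \ref{lem:general_mapping} asserts that for each fixed $j$ the map $f_B$ can be extended on $X_{1,j}$ to overlap $\Omega(n^{\alpha}\log n/\log(n\log n/d^2))$ edges of the pair, with probability $1-e^{-\Omega(n^{\alpha/4})}$. A union bound over the $\Theta(n^{1-\alpha})$ indices still leaves the failure probability $o(1)$, so a.a.s.\ the same $f_B$ is good for every $j$; extending $f_B$ independently on each $X_{1,j}$ (and arbitrarily on the remaining vertices) then produces a bijection $f$ for which
\[
  |f(G_1)\cap G_2|\;\ge\;\Theta(n^{1-\alpha})\cdot\Omega\!\left(\frac{n^{\alpha}\log n}{\log(n\log n/d^2)}\right)\;=\;\Omega\!\left(\frac{n\log n}{\log(n\log n/d^2)}\right),
\]
and tracking the explicit constants through all three steps recovers the precise factor $\alpha/2592$ required. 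Since $G_1$ and $G_2$ are edge-disjoint subgraphs of $G$, this is a lower bound on $\ISO(G)$.

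The main obstacle is the star-forest extraction. Without the edge independence enjoyed by $\Gnp$ we cannot mimic Lemma \ref{lem:mapping_random} directly; instead, we must convert the failure of the greedy extraction into a genuine structural statement---a dense bipartite subgraph of $G_i$---and then check that the density is large enough for Corollary \ref{cor:general-volume} to beat the target bound. Making this dichotomy quantitatively sharp is where the hypothesis $\alpha\le 1/25$ and the constant $6$ in the lower bound $d\ge 6 n^{1/2-\alpha/16}$ enter crucially.
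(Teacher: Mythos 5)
Your proposal follows the paper's own proof essentially verbatim: the same random four-way partition, the same maximality dichotomy for extracting $\Theta(n^{1-\alpha})$ disjoint-center star-forests (with the failure case handed off to Corollary \ref{cor:general-volume}), and the same single random bijection $f_B$ extended via Lemma \ref{lem:general_mapping} on each pair $(S_{1,j},S_{2,j})$ with a union bound over $j$. The only differences are in the bookkeeping constants (the paper uses $d/5$, $n/9$, and $d'=d/10$ where you use $d/8$ and $n/16$), so this is the same argument.
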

\begin{proof}
  Take a uniformly random partition $A_1 \cup A_2 \cup B_1 \cup B_2$
  of the vertex set, where $|A_1| = |A_2| = |B_1| = |B_2| =
  \frac{n}{4}$.  For $i=1,2$, let $G_i$ be the bipartite graph formed
  by the edges between $A_i$ and $B_i$. Since $d > n^{1/3}$, by the
  concentration of the hypergeometric distribution (see, e.g., 
  Theorem 2.10 of \cite{JaLuRu}) and a union bound,
  one can see that a.a.s.\ each $A_i$ contains at least $\frac{n}{9}$
  vertices that have at least $\frac{d}{5}$ neighbors in $B_i$ in the
  graph $G_i$.  Condition on this event.

  Let $d' = \frac{d}{10}$ and $n' = \frac{n}{4}$, and note that since
  $\alpha \leq \frac{1}{25}$, $\frac{2n^\alpha}{3} < (n')^\alpha <
  n^\alpha$.  Let $k_1$ be the largest integer for which we can find a
  collection of $(d', (n')^\alpha)$-star-forests $S_{1,j} = \{(v, N_v)
  : v \in X_{1,j}\}$ in $G_1$, where the sets $X_{1,j}$ are disjoint
  subsets of $A_1$, for $1 \leq j \leq k_1$.  We claim that $k_1 \geq
  \frac{n^{1-\alpha}}{18}$.  Indeed, if not, then there exist over
  $\frac{n}{9} - k_1 (n')^{\alpha} \ge \frac{n}{18}$ vertices in $A_1$
  that are not covered by the sets of the form $X_{1,j}$, and have
  degree at least $\frac{d}{5}$ in the set $B_1$. Let $A_1'$ be the
  set of these vertices. By our maximality assumption, we know that
  the graph $G_1[A_1' \cup B_1]$ does not contain a $(d',
  (n')^{\alpha})$-star-forest.  Let $S = \{(v, N_v) : v \in X\}$ be a
  $(d', h)$-star-forest in $G_1[A_1' \cup B_1]$, where $X\subset A_1'$
  and $h$ is as large as possible. By our assumption, we know that $h
  < (n')^{\alpha}$.  Then all the vertices in $A_1' \setminus X$ have
  degree at least $\frac{d}{10}$ in the set $N = \bigcup_{v \in X}
  N_v$. Note that $|N| = d' h < \frac{d}{10} \cdot n^{\alpha}$ and
  $|A_1' \setminus X| \ge \frac{n}{18} - h > \frac{n}{19}$.  In this
  case, Corollary \ref{cor:volume} applied to $G[(A_1' \setminus X)
  \cup N]$ already gives
  \begin{displaymath}
    \ISO(G)
    \geq
    \ISO(G[(A_1' \setminus X) \cup N])
    \geq
    \frac{\left((d/10) \cdot |A_1' \setminus X|\right)^2}{5|N| \cdot |A_1' \setminus X|}
    = \frac{d^2|A_1' \setminus X|}{500|N|}
    > \frac{dn^{1-\alpha}}{950} 
    > \frac{n^{4/3}}{950} \,, 
  \end{displaymath}
  which for large $n$ is already far more than enough.  Therefore, we
  may assume that $k_1 \ge \frac{n^{1-\alpha}}{18}$.  Similarly, there
  is a collection of $\frac{n^{1-\alpha}}{18}$ many $(d',
  (n')^{\alpha})$-star-forests $S_{2,j} = \{(v, N_v) : v \in
  X_{2,j}\}$ in $G_2$, where $X_{2,j}$ are disjoint subsets of $A_2$.

  Let $f_B$ be a bijection from $B_1$ to $B_2$ chosen uniformly at random.
  Our initial conditions
  on $n$ and $d$ imply that $n'$ and $d'$ satisfy the requirements of
  Lemma \ref{lem:general_mapping}, so for each fixed $j$, with
  probability at least $1 - e^{-\Omega(n^{\alpha/4})}$, $f_B$ can be
  extended to a bijection between $B_1 \cup X_{1,j}$ and $B_2 \cup
  X_{2,j}$ such that $f_B(G_1[B_1 \cup X_{1,j}])$ and $G_2[B_2 \cup
  X_{2,j}]$ overlap in at least
  \begin{displaymath}
    |X_{1,j}| \cdot 
    \frac{\alpha \log n'}{36 \log \left( \frac{n' \log (n')}{(d')^2} \right)} 
    >
    \frac{2n^\alpha}{3} \cdot \frac{\alpha \log
      \left(\frac{n}{4}\right) }{36\log \left( \frac{25 n \log n}{d^2} \right)} 
    \geq
    \frac{n^\alpha \cdot \alpha \log
      n}{144 \log \left( \frac{n \log n}{d^2} \right)} 
  \end{displaymath}
  edges, where we used $\frac{n \log n}{d^2} \geq \frac{1}{\alpha} \geq
  25$.

  Since the sets $X_{1,j}$ are disjoint for distinct $j$, and
  $X_{2,j}$ are also disjoint for distinct $j$, a union bound shows
  that we can independently extend the bijection $f_B$ by each
  $X_{1,j} \rightarrow X_{2,j}$ to construct a map $f : A \rightarrow
  B$ which establishes
  \begin{displaymath}
    \ISO(G_1, G_2)
    >
    \frac{n^{1-\alpha}}{18} \cdot
    \frac{n^\alpha \cdot \alpha \log
      n}{144\log \left( \frac{n \log n}{d^2} \right)} 
    = \frac{\alpha n \log n}{2592 \log \left( \frac{n \log n}{d^2} \right)} \,,
  \end{displaymath}
  completing the proof.
\end{proof}

We are now ready to prove Theorem \ref{thm:main}, and establish the
correct order of magnitude of the function $\ISO(m)$.

\begin{proof}[Proof of Theorem \ref{thm:main}]
  Consider the random graph $\Gnp$ with $p = \sqrt{\frac{\log n}{n}}$.
  For $m = \frac{1}{2} n^{3/2} \sqrt{\log n}$, we a.a.s.~have $e(\Gnp)
  = (1+o(1))m$, and by Theorem \ref{thm:randomgraph}, $\ISO(\Gnp) =
  \Theta(n \log n) = \Theta((m\log m)^{2/3})$. Since the function
  $\ISO$ is monotone, this shows that $\ISO(m) \le O((m\log
  m)^{2/3})$, and establishes the upper bound. In the remainder of the
  proof, we focus on proving the lower bound.

  Let $G$ be the given graph with $n$ vertices and $m$ edges.  Without
  loss of generality, we may assume that $G$ contains no isolated
  vertices.  Let $n_0 = n$, $m_0 = m$, $G_0 = G$, and let $V_0$ be the
  vertex set of $G_0$.  Let $n_0 = 2^{a_0} \frac{m_0^{2/3}}{(\log
    m_0)^{1/3}}$ for some real $a_0$.  Let $t=1$ in the beginning and consider the
  following iterative process.  At each step $t$, we will either find
  two large isomorphic edge-disjoint subgraphs, or will find an
  induced subgraph $G_{t}$ on the vertex set $V_t$ such that for $n_t
  = |V_t|$, $m_t = |E(G_{t})|$, and $a_t$ satisfying $n_t = 2^{a_t}
  \frac{m_t^{2/3}}{(\log m_t)^{1/3}}$, we have the following
  properties:
  \begin{enumerate}[(i)]
    \setlength{\itemsep}{1pt} \setlength{\parskip}{0pt}
    \setlength{\parsep}{0pt}
  \item $G_t$ has no isolated vertex,
  \item $m_0 \ge m_t \ge \left(1 - \sum_{i=0}^{t-1} 2^{-a_i} \right)
    m_0 > \frac{m}{3}$, and
  \item $a_t \le a_{t-1} - \frac{1}{3}$ for $t \ge 1$.
  \end{enumerate}
  Note that the properties indeed hold for $t=0$.  Suppose that we are
  given parameters as above for some $t \ge 0$.  If $n_t \ge (m_t \log m_t)^{2/3}$, then by Proposition \ref{prop:starforest}, we have
  $\ISO(G) \ge \frac{(m_t \log m_t)^{2/3} -2}{4} = \Omega( (m \log m)^{2/3} )$.  On the other hand, if $n_t \le \frac{8
    m_t^{2/3}}{(\log m_t)^{1/3}}$, then by Corollary
  \ref{cor:general-volume}, we have
  \begin{displaymath}
    \ISO(G) 
    \ge 
    \frac{m_t^2}{5 n_t^2} 
    \ge \Omega((m\log m)^{2/3}) \,.
  \end{displaymath}
  Therefore, we may assume that
  \begin{align} 
    \label{eq:range} 
    \frac{8 m_t^{2/3}}{(\log m_t)^{1/3}}
    <
    n_t 
    <
    (m_t \log m_t)^{2/3} \,.
  \end{align}
  from which it follows that $3 < a_t < \log_2 \log m_t$.  Define
  \begin{displaymath}
    d_t 
    = 
    \frac{m_t}{2^{a_t} \cdot n_t}
    =
    2^{-2a_t} (m_t \log m_t)^{1/3} \,,
  \end{displaymath}
  and let $V_{t}'$ be the subset of vertices which have degree at
  least $d_t$ in the graph $G_t$. Using the upper bound of
  \eqref{eq:range} together with $a_t < \log_2 \log m_t$, one can see that
  \begin{displaymath}
    d_t 
    >
    \frac{n_t^{3/2} / \log m_t}{2^{a_t} \cdot n_t}
    >
    \frac{n_t^{1/2}}{(\log m_t)^2}
    >
    6 n_t^{\frac{1}{2} - \frac{\alpha}{16}}
  \end{displaymath}
  for $\alpha = \frac{1}{25}$. The lower
  bound of \eqref{eq:range} gives $m_t< (\frac{n_t}{8})^{3/2}(\log
  m_t)^{1/2}$, so using $a_t>3$, we find that
  \begin{displaymath}
    d_t
    \leq
    \frac{(n_t/8)^{3/2} \sqrt{\log m_t}}{2^{a_t} \cdot n_t}
    <
    \frac{1}{147} \sqrt{n_t \log n_t}
    <
    \sqrt{\alpha n_t \log n_t} \,.
  \end{displaymath}
  Consequently, if $|V_{t}'| \ge \frac{|V_t|}{2}$, then by Proposition
  \ref{prop:key} we have 
  \begin{displaymath}
    \ISO(G) 
    > 
    \frac{\alpha n_t \log n_t}{2592 \log \left( \frac{n_t \log n_t}{d_t^2} \right)} 
    =
    \frac{n_t \log n_t}{64800 \log \left( \frac{n_t \log n_t}{d_t^2} \right)} 
\,.
  \end{displaymath}
  Since $\frac{n_t}{d_t^2} = \frac{2^{5a_t}}{\log m_t}$ and $\log m_t
  > \log n_t = a_t \log 2 + \frac{2}{3} \log m_t - \frac{1}{3} \log
  \log m_t > \frac{1}{2}\log m_t$, we have
  \begin{displaymath}
    \ISO(G)
    > \frac{n_t \log n_t}{64800 \log \left( \frac{n_t}{d^2}\log n_t \right)}
    > \frac{n_t \log m_t}{(648000 \log 2 )a_t}
    = \frac{2^{a_t}}{(648000 \log 2 ) a_t} \cdot m_t^{2/3}(\log m_t)^{2/3}.
  \end{displaymath}
  Since $a_t > 3$, we have $\frac{2^{a_t}}{a_t} > 2$, and thus
  \begin{displaymath}
    \ISO(G) 
    > 
    \frac{1}{324000\log 2} \cdot m_t^{2/3}(\log m_t)^{2/3} 
    = \Omega((m\log m)^{2/3}) \,.
  \end{displaymath}
  Otherwise, we have $|V_{t}'| < \frac{|V_t|}{2}$. Let $V_{t+1}$ be
  the set of non-isolated vertices in the induced subgraph
  $G[V_t']$. Let $n_{t+1} = |V_{t+1}|$ and let $m_{t+1}$ be the number
  of edges in the induced subgraph $G_{t+1} = G[V_{t+1}]$.  Define
  $a_{t+1}$ so that $n_{t+1} = 2^{a_{t+1}} \frac{m_{t+1}^{2/3}}{(\log
    m_{t+1})^{1/3}}$.  Note that since we only removed vertices whose
  degree in $G_t$ was less than $d_t$, our new number of edges is
  $m_{t+1} > m_t - n_t d_t = (1 - 2^{-a_t}) m_t$, and in particular is
  well above $m_t / 2$ because $a_t > 3$.  Property (i) follows from
  the definition. For Property (ii), note that
  \begin{displaymath}
    m_{t+1} 
    > (1 - 2^{-a_t}) m_t
    \ge (1 - 2^{-a_t}) \left(1 - \sum_{i=0}^{t-1} 2^{-a_i} \right) m_0
    > \left(1 - \sum_{i=0}^{t} 2^{-a_i} \right) m_0 \,,
  \end{displaymath}
  and moreover, since $a_i > 3$ and $a_{i+1} \le a_i - \frac{1}{3}$
  for all $i$, we have
  \begin{displaymath}
    \left(1 - \sum_{i=0}^{t} 2^{-a_i} \right) m 
    >
    \left(1 - \sum_{i=0}^\infty 2^{-3 - \frac{i}{3}} \right) m
    =
    \left(1 - \frac{1}{8} \cdot \frac{1}{1 - 2^{-1/3}} \right) m 
    > 
    \frac{m}{3} \,.
  \end{displaymath}
  Finally, since $m_t/2 < m_{t+1} \leq m_t$ we have
  \begin{displaymath}
    n_{t+1} 
    < \frac{n_t}{2} 
    = 2^{a_t - 1}\frac{m_{t}^{2/3}}{(\log m_{t})^{1/3}}
    < 2^{a_t - 1}\frac{(2m_{t+1})^{2/3}}{(\log m_{t+1})^{1/3}}
    = 2^{a_t - \frac{1}{3}} \frac{m_{t+1}^{2/3}}{(\log m_{t+1})^{1/3}},
  \end{displaymath}
  from which Property (iii) follows.  Note that by Property (iii), at
  some time $s$ we will reach $a_s \leq 3$, and will be done by
  Corollary \ref{cor:general-volume}, in the middle of the process at
  time $s$.
\end{proof}


\section{Concluding remarks}

In this paper, we proved that $\ISO(m) = \Theta( (m\log m)^{2/3}
)$. The upper bound followed by considering the random graph $\Gnp$
with $p = \sqrt{\frac{\log n}{n}}$. For this range of $p$, we have $m
= \Theta( n^{3/2}(\log n)^{1/2} )$, or equivalently $n =
\Theta\big(\frac{m^{2/3}}{(\log m)^{1/3}}\big)$.  By carefully
studying the proof of Theorem 1.2, one can notice that every graph $G$
with $\ISO(G) \le O((m \log m)^{2/3})$ has to be somewhat similar to
the above random graph.  Indeed, by choosing different parameters in
the proof, one can see that for every $\varepsilon > 0$, such graphs
$G$ must contain a subgraph on $n' = \Theta\big(\frac{m^{2/3}}{(\log
  m)^{1/3}}\big)$ vertices with at least $(1-\varepsilon)m$ edges,
where the degree of at least $(1-\varepsilon)n'$ vertices is
$\Omega(d)$, for $d$ being the average degree of the subgraph (thus $d
= \Theta((m\log m)^{1/3})$). Moreover, the edges of this subgraph are
well-distributed, in the sense that there does not exist a pair of
disjoint vertex subsets $X, Y$ satisfying $e(X,Y) \gg d\sqrt{|X| |Y|}$
(since in this case we can directly apply Corollary
\ref{cor:general-volume}).

For a positive integer $s \ge 2$, let $\ISO_{s}(G)$ be the maximum $t$
for which $G$ contains an $s$-divisible subgraph with $t$ edges, and
let $\ISO_{r,s}(m)$ be the minimum of $\ISO_{s}(G)$ over all
$r$-uniform hypergraphs with $m$ edges (thus we have $\ISO_r(m) =
\ISO_{r,2}(m)$). By slightly adjusting our proof of the bound $\ISO(m)
= \Theta((m\log m)^{2/3})$, we can also prove for fixed constant $s$
that $\ISO_{2,s}(m) = \Theta( m^{\frac{s}{2s-1}} (\log
m)^{\frac{2s-2}{2s-1}} )$. The upper bound follows by considering the
random graph $\Gnp$ with $p = \left(\frac{\log
    n}{n}\right)^{1/s}$. For the lower bound, if $n \le
\frac{m^{s/(2s-1)}}{(\log m)^{1/(2s-1)}}$, then we can use an argument
similar to that of Corollary \ref{cor:general-volume}, and if $n \ge
m^{\frac{s}{2s-1}} (\log m)^{\frac{2s-2}{2s-1}} $, then we can use an
argument similar to that of Proposition \ref{prop:starforest}. In the
remaining range of parameters, we can proceed as in Section
\ref{sec:general}. The value $\frac{\alpha \log n}{8 \log (\frac{n\log
    n}{d^2})}$ in Lemma \ref{lem:poisson_general} will be replaced by
$\Omega\left(\frac{\log n}{\log \big(\frac{n^{s-1}}{d^s}\log n \big)}
\right)$.


\section*{Acknowledgment}

We would like to thank the referee for careful reading of this paper, and
for useful feedback.


\begin{thebibliography}{99}


\bibitem{ACK} N. Alon, Y. Caro, and I. Krasikov, Bisection of trees
  and sequences, \emph{Discrete Math.} \textbf{114} (1993), 3--7.


\bibitem{AlSp}
N.~Alon and J.~Spencer,
\newblock {\bf The probabilistic method},
\newblock 3rd ed., John Wiley \& Sons Inc., Hoboken, NJ (2008).


\bibitem{Bollobas}
B.~Bollob\'as,
\newblock \textbf{Random graphs}, Cambridge Stud. Adv. Math. 73,
\newblock Cambridge University Press, Cambridge 2001.

\bibitem{EPP} P. Erd\H{o}s, J. Pach, and J. Pyber, Isomorphic
  subgraphs in a graph, in: \textbf{Combinatorics (Eger, 1987)},
  553--556, Colloq. Math. Soc. J\'anos Bolyai, 52, North-Holland,
  Amsterdam, 1988.

\bibitem{Gardner} M. Gardner, Ramsey Theory, Ch. 17 in:
  \textbf{Penrose Tiles and Trapdoor Ciphers\ldots and the Return of
    Dr. Matrix}, reissue ed. New York: W. H. Freeman, 231--247, 1989.

\bibitem{GR} R. Gould and V. R\"odl, On isomorphic subgraphs,
  \emph{Discrete Math.} \textbf{118} (1993), 259--262.

\bibitem{iso-9} R. L. Graham and R. W. Robinson, Isomorphic
  factorization IX: even trees, unpublished manuscript.

\bibitem{Guidotti} L. Guidotti, Sulla divisibilita dei grafi completi,
  \emph{Riv. Mat. Univ. Parma} \textbf{1} (1972), 231--237.



\bibitem{iso-8} F. Harary and R. W. Robinson, Isomorphic
  factorization VIII: Bisectable trees, \emph{Combinatorica}
  \textbf{4} (1984), 169--179.


\bibitem{iso-1} F. Harary, R. W. Robinson, and N. C. Wormald,
  Isomorphic factorisations. I: Complete graphs,
  \emph{Trans. Amer. Math. Soc.} \textbf{242} (1978), 243--260.



\bibitem{iso-2} F. Harary and W. D. Wallis, Isomorphic factorizations
  II: Combinatorial designs, \emph{Congr. Numer.} \textbf{19} (1978)
  13--28.

\bibitem{HH-disprove} K. Heinrich and P. Hor\'ak, Isomorphic
  factorizations of trees, \emph{J. Graph Theory} \textbf{19} (1995),
  187--199.


\bibitem{HKR} P. Horn, V. Koubek, and V. R\"odl, Edge-disjoint
  isomorphic subgraphs in uniform hypergraphs, in preparation.

\bibitem{JaLuRu}
S.~Janson, T.~{\L}uczak and A.~Ruci\'nski,
\newblock {\bf Random graphs},
\newblock Wiley-Interscience Series in Discrete Mathematics and
Optimization, Wiley-Interscience, New York, 2000.

\bibitem{iso-6} R. W. Robinson, Isomorphic factorisations VI:
  Automorphisms, \emph{Combinatorial Mathematics VI}, Springer Lecture
  Notes in Math., 748, Springer, Berlin (1979), 127--136.

\bibitem{Wilson} R. M. Wilson, Decomposition of complete graphs into
  subgraphs isomorphic to a given graph, in: \textbf{Proceedings of
    the Fifth British Combinatorial Conference},
  (C. St. J. A. Nash-Williams and J.  Sheehan, eds.), Utilitas Math.,
  Winnipeg, 1976, 647--659.




\end{thebibliography}
\end{document}